\providecommand{\customgenericname}{}
\newcommand{\newcustomtheorem}[2]{\newenvironment{#1}[1]
  {\renewcommand\customgenericname{#2}
   \renewcommand\theinnercustomgeneric{##1}\innercustomgeneric}{\endinnercustomgeneric}}
\newcommand{\newcustomlemma}[2]{\newenvironment{#1}[1]
  {\renewcommand\customgenericname{#2}
   \renewcommand\theinnercustomgeneric{##1} \innercustomgeneric}{\endinnercustomgeneric}}
\theoremstyle{plain}
\newtheorem{theorem}{Theorem}[section]
\newtheorem*{example}{Examples}
\newtheorem{lemma}[theorem]{Lemma}
\newtheorem{corollary}[theorem]{Corollary}
\theoremstyle{definition}
\theoremstyle{remark}
\newtheorem{remark}{Remark}
\numberwithin{equation}{section}
\newcommand{\R}{\mathbb{R}}
\newcommand{\Z}{\mathbb{Z}}
\newcommand{\cB}{\mathcal{B}}
\newcommand{\cD}{\mathcal{D}}
\newcommand{\cH}{\mathcal{H}}
\newcommand{\cI}{\mathcal{I}}
\newcommand{\cM}{\mathcal{M}}
\newcommand{\bR}{\mathbb{R}}
\newcommand{\bZ}{\mathbb{Z}}
\newcommand{\bN}{\mathbb{N}}
\newcommand{\q}{\quad}
\newcommand{\dd}{\,\mathrm{d}}
\newcommand{\wh}{\widehat}
\newcommand{\wt}{\widetilde}
\begin{document}

\title[Maximal operators with fractal dimension dilation]
{Maximal operators given by Fourier multipliers with dilation of fractional dimensions}

\author[J. B. Lee]{Jin Bong Lee}
\address[J. B. Lee]{Research Institute of Mathematics, Seoul National University, Seoul 08826, Republic of Korea}
\email{jinblee@snu.ac.kr}

\author[J. Seo]{Jinsol Seo}
\address[J. Seo]{School of Mathematics, Korea Institute for Advanced Study, 85 Hoegiro Dongdaemun-gu, Seoul 02455, Republic of Korea}
\email{seo9401@kias.re.kr}

\subjclass[2020]{42B15, 42B25, 42B35, 42B37}
\keywords{Fourier muitipliers, Maximal operators, Bilinear interpolation}

\begin{abstract}
    In this paper, we investigate $L^p$ bounds of maximal Fourier multiplier operators with dilation of fractional dimensions.
    For Fourier multipliers, we suggest a criterion related to dimensions of dilation sets which guarantees $L^p$ bounds of the maximal operators for each $p$.
    Our criterion covers Mikhlin-type multipliers, multipliers with limited decay, and multipliers with slow decay.
\end{abstract}

\maketitle

\section{Introduction}

Let $m$ be a bounded function.
Then for a Schwartz function $f$, a Fourier multiplier operator $T_m(f)(x)$ and its dilation $T_{m(t\,\cdot\,)}(f)(x)$ are given by
$$
    \widehat{T_m(f)}(\xi) := m(\xi)\widehat{f}(\xi),\quad
    \widehat{T_{m(t\,\cdot\,)}(f)}(\xi) := m(t\xi)\widehat{f}(\xi),
$$
where $\widehat{f}$ denotes the Fourier transform of $f$.
For a set $E\subset (0,\infty)$, we define the following maximal operators whose dilation is taken in $E$:
\begin{align*}
    \mathcal{M}_m^E(f)(x) \coloneqq \sup_{t\in E}\left|T_{m(t)}(f)(x)\right|
\end{align*}
The purpose of this paper is to investigate conditions for $m$ which guarantee $L^p(\bR^d)$ bounds of $\mathcal{M}_m^E$.

For $E:=\{1\}$, we have $\mathcal{M}_m^E = T_m$, hence the following H\"ormander--Mikhlin condition is a sufficient condition for the $L^p(\bR^d)$ bounds of $\mathcal{M}_m^E$:
\begin{align}\label{HM}
    \sup_{j\in\bZ} \| m(2^j\,\cdot\,) \widehat{\psi}(\,\cdot\,)\|_{L_s^2(\bR^d)}<\infty\quad\text{for some}\quad  s>d/2.
\end{align}
Note that $\widehat{\psi}(\xi/2^j)$ is a frequency cut-off function on $|\xi|\sim 2^j$.
When $E:=(0, \infty)$, Christ--Grafakos--Honz\'ik--Seeger \cite{Ch_Gr_Hon_See2005} proved that the H\"ormander--Mikhlin condition \eqref{HM} is not enough for $L^p(\bR^d)$ bounds of $\mathcal{M}_m^E$.
Moreover, they presented the following sufficient conditions for the $L^p(\bR^d)$ bounds (see \cite[Theorem 1.2 and (1.12)]{Ch_Gr_Hon_See2005}):
\begin{customthm}{A}\label{thm_CGHS}
Let $p\in(0,\infty)$ and $E=(0,\infty)$. Suppose $m$ satisfies that there exist $1<q<\infty$ and $s>\frac{d}{r}+\min\{\frac{1}{p},\frac{1}{p'}\}$, $r:=\min\{p,2\}$, such that
\begin{align}\label{CGHS}
    \left(\sum_{j\in\bZ} \| m(2^j\,\cdot\,) \widehat{\psi}(\,\cdot\,)\|_{L_s^r(\bR^d)}^q\right)^{1/q} <\infty\,.
\end{align}
Then $\mathcal{M}_m^E$ is bounded on $L^p(\bR^d)$.

In addition, if \eqref{CGHS} holds for $q>1$, $r=2$, and $s>\frac{d}{2}$, then $\cM_m^E$ is a bounded map from $L^{\infty}$ to $BMO$.
\end{customthm}
\noindent One can understand the condition \eqref{CGHS} is an $\ell^q(\bZ)$ analogue of \eqref{HM}.   
We recommend \cite{Ch_Gr_Hon_See2005, Gr_Hon_See2006} for more refined summability condition than \eqref{CGHS}.

In \cite{LeeSeo_2023}, the authors of the present paper improved the regularity condition of \eqref{CGHS} to $s>d\left|\frac{1}{2} - \frac{1}{p}\right| + \min\{\frac{1}{p}, \frac{1}{2}\}$ provided that Sobolev spaces $L_s^r$ are replaced with Besov spaces $B_{p_0}^s$ where $1/p_0 =|1/2- 1/p|$, and $q$ is fixed by $2$.

When it comes to the case of $E$ whose dimension is between $0$ and $1$,
to the authors' knowledge, there are not many results on abstract Fourier multipliers.
If the Fourier multipliers are given by Fourier transforms of surface-carried measures, however, one can find remarkable results on $L^p$ bounds of associated maximal averages. 
One of the most interesting objects is the spherical maximal functions.
$$
    \mathcal{M}_{sph}(f)(x) := \sup_{0<t}\left| \int_{\mathbb{S}^{d-1}} f(x-ty)~\mathrm{d}\sigma(y)\right|.
$$
It is widely known due to E. M. Stein \cite{St1976} ($d\geq3$) and J. Bourgain \cite{Bo1986} ($d=2$) that $\mathcal{M}_{sph}$ is bounded on $L^p(\bR^d)$ if and only if $p>\frac{d}{d-1}$.
If one takes $t$ as a lacunary sequence, then $\mathcal{M}_{sph}$ is bounded on $L^p(\bR^d)$ for $p>1$ due to C. P. Calder\'on \cite{CPCal1979}.

If one considers $E$ with dimension strictly between $0$ and $1$, Seeger--Wainger--Wright \cite{SWW1995} showed that $\mathcal{M}_{sph}^E$ is bounded on $L^p(\bR^d)$ if $p>1+\frac{\kappa(E)}{d-1}$, and unbounded on $L^p(\bR^d)$ if $1<p<1+\frac{\kappa(E)}{d-1}$.
The quantity $\kappa(E)$ is given by
\begin{align*}
\kappa(E):=\lim_{\delta\rightarrow 0}\sup_{j\in\bZ}\frac{\log N(E_j,\delta)}{-\log \delta}\,,
\end{align*}
where $E_j:=\big(2^{-j}E\big)\cap [1,2]$, and $N(E, \delta)$ is the $\delta$-entropy number of $E$ defined by
\begin{align*}
    N(E,\delta):=\#\big\{k\in\bN\,:\,E\cap [k\delta,(k+1)\delta]\neq \emptyset\big\}\,.
\end{align*}
One of classical examples of $E$ with $\kappa(E)<1$ is $E=\left\{1+n^{-a}\mid n=1,2,\dots\right\}$, $a>0$, which yields $\kappa(E) = (1+a)^{-1}$.
Further examples are introduced in \cite{RoSe2023}.
Since many examples are given as sequences, we present an upper bound of $\kappa(E)$ for a sequential $E$ (Corollary~\ref{240704231}).
To our best knowledge, the upper bound has not been given for general sequences in studies of maximal functions.
We note that for a compact $E$, $\kappa(E)$ equals the Minkowski dimension
\begin{align*}
    \dim_{\mathcal{M}}(E)
    &\coloneqq\inf\left\{a>0:\text{$\exists\,\,C>0$ such that $\forall\,\,0<\delta\leq 1$, $N(E,\delta)\leq C\delta^{-a}$}\right\}\,.
\end{align*}

Later, Duoandikoetxea--Vargas \cite{DuoVar1998} extended studies of \cite{SWW1995} to compactly supported Borel measures and Fourier multipliers.
Precisely, they proved the following theorem:
\begin{customthm}{B}\label{thm_DuoVar}\,

    \begin{enumerate}
    \item 
    Let $m$ be the Fourier transform of a compactly supported finite Borel measure and $E\subset(0,\infty)$.
    Assume that 
    $$
        |m(\xi)|\leq C|\xi|^{-a}\quad\text{for some $a>\frac{\kappa(E)}{2}$}.
    $$
    Then $\mathcal{M}_m^E$ is bounded in $L^p(\bR^d)$ for $p>1+\frac{\kappa(E)}{2a}$.
    \item
    Let $m\in C^{s+1}(\bR^d)$ where $s$ is the smallest integer strictly greater than $\frac{d}{2}$, and let $E\subset(0,\infty)$.
    Assume that 
    $$
        \left|\partial^\gamma m(\xi)\right| \leq C|\xi|^{-a}\quad\text{for all $|\gamma|\leq s+1$ and some $a>\kappa(E)/2$}.
    $$
    Then $\mathcal{M}_m^E$ is bounded in $L^p(\bR^d)$ for
    \begin{align}\label{240516302}
        \frac{2d}{d+2a-\kappa(E)}<p<2\frac{d-\kappa(E)}{d-2a}.
    \end{align}
    \end{enumerate}
\end{customthm}
\noindent In case of the spherical average as in \cite{SWW1995}, the decay factor of Theorem~\ref{thm_DuoVar} is $a=\frac{d-1}{2}$.
Thus, one can check that $(1)$ of Theorem~\ref{thm_DuoVar} recovers the result of \cite{SWW1995}.

We pursue further study on $\mathcal{M}_m^E$ in view of $(2)$ of Theorem~\ref{thm_DuoVar}.
In contrast to earlier works relying on the covering property of $E$, our approach to the Minkowski dimension is based on integral quantities (see \eqref{240409516}), motivated by the relation between the Assouad and the Aikawa dimensions. 
This framework enables us to construct a refined square function to control  $\mathcal{M}_m^E$.
Then, following the approach in \cite{LeeSeo_2023}, we make use of fractional calculus, bi-linear interpolation, and $\Sigma^2$ spaces introduced in \cite{LeeSeo_2023}; some of these materials will be presented in Section~\ref{preliminaries}.
These ingredients allow us to improve the regularity condition for $m$ to $s>d\left|\frac{1}{2}-\frac{1}{p}\right|+\kappa(E)\min\left\{\frac{1}{2}, \frac{1}{p}\right\}$.
Moreover, we provide a general result involving Fourier multipliers of limited decay, singular Fourier multipliers, and multipliers with slow decay;
we say that a Fourier multiplier $m$ is with slow decay if there exist $\beta>0$ and $0<\delta<1$ such that
\begin{align}\label{slowdecay}
    \left|\partial^\gamma m(\xi)\right| \lesssim |\xi|^{-\beta-\delta|\gamma|}\quad \text{for all multi-indices $\gamma$.}
\end{align}
One can understand $m$ satisfying \eqref{slowdecay} as a simple analogue of the H\"ormander symbol class.

We now introduce our main result.
Take $\phi \in \mathscr{S}(\R^d)$ such that $\wh{\phi} \equiv 1$ on $B(0,1)$ and $\wh{\phi} \equiv 0$ on $B(0,2)^c$.
Define $\psi$ and $\psi_j$ such that
\begin{align}\label{psi}
\wh{\psi}_j := \wh{\psi}(\,\cdot\,/2^j) =\wh{\phi}(\,\cdot\,/2^j) - \wh{\phi}(\,\cdot\,/2^{j-1})\q \text{for}\q j\in\Z\,.
\end{align}
For a Banach space $\cB$ continuously embedded in $\cD'(\bR^d)$,
we denote $\Sigma^2(\cB)$ the set of all $m\in\cD'(\bR^d\setminus\{0\})$ such that
\begin{align}\label{vv Banach}
\|m\|_{\Sigma^2(\cB)}^2:=\sum_{j\in\Z} \| m(2^j \,\cdot\,) \wh{\psi}(\,\cdot\,) \|_{\cB}^2 <\infty\,.
\end{align}
\begin{theorem}\label{thm_main}
    Let $p\in(1,\infty)$ and $E\subset(0,\infty)$ satisfy $\kappa(E)<1$.
    Suppose that $m$ is of class $\Sigma^2(B_{p_0}^s)$ with $\frac{1}{p_0}=\left|\frac{1}{2}-\frac{1}{p}\right|$ and 
    \begin{align}\label{240328_1552}
        s>\frac{d}{p_0} + \frac{\kappa(E)}{2}\,.
    \end{align}
    Then $\mathcal{M}_m^E$ is bounded on $L^p(\bR^d)$.
\end{theorem}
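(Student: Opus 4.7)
My plan is to adapt the strategy of \cite{LeeSeo_2023}, which treats the case $E = (0,\infty)$, by inserting weights that reflect the Minkowski dimension $\kappa(E)$ through its integral (Aikawa-type) characterization. Begin with the dyadic decomposition $m_j(\xi) := m(\xi)\wh{\psi}(\xi/2^j)$ so that $\cM_m^E f \leq \sum_{j\in\Z} \cM_{m_j}^E f$, with each $m_j$ supported on $|\xi|\sim 2^j$. The Fourier scaling $\xi\mapsto 2^{-j}\xi$ reduces the study of $\cM_{m_j}^E$ to a normalized problem involving $\mu^{(j)} := m(2^j\,\cdot\,)\wh{\psi}$, supported on $|\xi|\sim 1$; this is precisely the piece whose $B_{p_0}^s$-norm contributes to the $\Sigma^2$ sum in \eqref{vv Banach}.

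The heart of the argument is a Sobolev-type maximal-in-$t$ inequality. After further rescaling of $E\cap[2^k,2^{k+1}]$ to $E_k := (2^{-k}E)\cap[1,2]$, one seeks for smooth $g$ on $[1,2]$ an estimate
$$
    \sup_{t \in E_k} |g(t)|^2 \;\lesssim\; \int_{E_k^\delta} |g(t)|^2 \, w(t) \, dt \;+\; \delta^2 \int_{E_k^\delta} |g'(t)|^2 \, W(t) \, dt,
$$
where $E_k^\delta$ denotes the $\delta$-neighborhood of $E_k$ and the weights $w, W$ encode the Aikawa-type integral characterization of $\kappa(E)$ (rather than a covering of $E$ as in \cite{SWW1995, DuoVar1998}). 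Applied to $g(t) = T_{m_j(t\,\cdot\,)}f(x)$, with $g'(t) = T_{(\xi\cdot\nabla m_j)(t\,\cdot\,)}f(x)$, this yields a refined square-function control of $\cM_{m_j}^E f$ by $L^2$-type averages in $t$ of the (rescaled) operator and its $t$-derivative.

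The passage to $L^p(\bR^d)$ is executed by bilinear interpolation. At $p = 2$, Plancherel reduces matters to an $L^2$-type bound demanding only $s > \kappa(E)/2$; at the other endpoint, with $1/p_0 = |1/2 - 1/p|$, a Littlewood--Paley and fractional-calculus argument coupled with Besov-space multiplier theory contributes the loss $d/p_0$. Bilinear interpolation then delivers the sharp regularity $s > d/p_0 + \kappa(E)/2$, and the $\ell^2(j)$-structure of $\Sigma^2(B_{p_0}^s)$ together with Cauchy--Schwarz closes the summation over $j$.

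The principal obstacle is the second step: constructing the maximal-in-$t$ inequality with weights $w, W$ that feed cleanly into the bilinear interpolation machinery, without logarithmic losses that would destroy the $j$-summability afforded by $\Sigma^2$. The Aikawa-type integral formulation, as opposed to the purely combinatorial Minkowski covering, appears essential for compatibility with the $\Sigma^2(B_{p_0}^s)$ framework of \cite{LeeSeo_2023}, and adapting that earlier bilinear interpolation to absorb the additional $\kappa(E)/2$ loss without leaking regularity is the secondary technical hurdle.
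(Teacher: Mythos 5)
Your high-level architecture (an Aikawa-type integral characterization of $\kappa(E)$, a weighted square function in the dilation variable, and bilinear interpolation between an $L^2$-type bound and Calder\'on--Zygmund endpoints) does match the paper's, but the first concrete step of your plan fails. You decompose the multiplier as $m=\sum_j m_j$ with $m_j=m\,\wh{\psi}(\cdot/2^j)$, bound $\cM_m^E f\leq\sum_{j\in\Z}\cM_{m_j}^E f$, and assert that ``the $\ell^2(j)$-structure of $\Sigma^2(B_{p_0}^s)$ together with Cauchy--Schwarz closes the summation over $j$.'' It does not: after the pointwise triangle inequality you are summing operator norms of the single-piece maximal operators, which requires $\ell^1$-summability of $\|m(2^j\cdot)\wh{\psi}\|_{B_{p_0}^s}$, strictly stronger than the hypothesis $m\in\Sigma^2(B_{p_0}^s)$; and there is no orthogonality across $j$ to invoke, since for $t$ ranging over many octaves of $E$ the operator $T_{m_j(t\,\cdot\,)}$ acts on frequencies $|\xi|\sim 2^j/t$, i.e.\ on $t$-dependent Littlewood--Paley pieces of $f$. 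Making an $\ell^2$ (rather than $\ell^1$) condition suffice is precisely the difficulty behind Theorem~\ref{thm_CGHS} and \cite{LeeSeo_2023}. The paper never splits $m$: it keeps $m$ whole, writes $t=2^js$ with $s\in[1,2]$, and controls $\sup_{t\in E}|T_{m(t\,\cdot\,)}f|$ by the $\cH$-valued operator $G(m,f)=\{T_{m(2^js\,\cdot\,)}f\}_{j\in\Z,s\in[1,2]}$ with weight $d(s,\widetilde E_j)^{-1+2\beta}$; the $\ell^2$-in-$j$ sum of \eqref{vv Banach} then appears automatically when one evaluates $\|M_m(\xi)\|_{\cH}$ at a fixed $\xi$, because the dilations $2^js\xi$ visit each dyadic annulus once per $j$ (this is estimate \eqref{ineq_L2esti}, resting on Lemma~\ref{231109742} to make $\sup_j\int_1^2 d(s,\widetilde E_j)^{-1+2\beta}\dd s$ finite for $2\beta>\kappa(E)$).

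The second gap is that your key maximal-in-$t$ inequality is only ``sought,'' and in the form you state it, it would not deliver the regularity \eqref{240328_1552}. With a genuine first derivative $g'$, the derivative term corresponds on the multiplier side to $\xi\cdot\nabla m$, which costs a full derivative (Lemma~\ref{231109931}(3)); recovering only a $\kappa(E)/2$ loss then forces a scale-by-scale optimization of your parameter $\delta$ over coverings of $E_k$, i.e.\ exactly the covering argument of \cite{SWW1995,DuoVar1998} that you set out to avoid, and it is unclear how that feeds into the $\Sigma^2(B_{p_0}^s)$ bilinear interpolation without loss. The paper's substitute is Lemma~\ref{lem_sqrftn}: a Riemann--Liouville fractional derivative $D^{\alpha}_{0+}$ of order $\alpha\in\big(\tfrac{\kappa(E)}{2},\tfrac12\big)$ in the dilation variable, which via Lemma~\ref{lem_fraccal} reduces matters to $m$ and the auxiliary multiplier $\wt{m}$ of \eqref{2508291038}, and the embedding Lemma~\ref{lem_emb} shows $\wt{m}$ costs only $\alpha+\varepsilon$ derivatives. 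That fractional-order mechanism (or an equivalent interpolation in the order of differentiation) is the missing idea in your proposal; without it, and without repairing the $j$-summation as above, the argument does not close under the stated $\Sigma^2(B_{p_0}^s)$ hypothesis.
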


\begin{remark}\label{rmk_main}
    We note that the endpoint result of Theorem~\ref{thm_CGHS} holds for arbitrary $E\subset\bR$ (See \cite[Corollary 4.2 and (4.16)]{Ch_Gr_Hon_See2005}).
    Thus by interpolating Theorem~\ref{thm_main} (for $p=2$) and the result of Theorem~\ref{thm_CGHS} (for $p=\infty$), the condition \eqref{240328_1552} is replaced by
    \begin{align*}
        s>\frac{d}{p_0} + \kappa(E)\min\left\{\frac{1}{2}, \frac{1}{p}\right\}\,.
    \end{align*}
\end{remark}

We emphasize that the condition for multipliers $m$ in Theorem~\ref{thm_main} encompasses the limited decay condition, so our result with Remark~\ref{rmk_main} recovers $(2)$ of Theorem~\ref{thm_DuoVar} (see examples below).  
On the other hand, one can see from $(1)$ of Theorem~\ref{thm_DuoVar} that the case of $m$ given by the Fourier transform of a measure yields larger range of $p$ for $L^p$ bounds on $\mathcal{M}_m$.
This phenomenon is related to geometric aspects of the measure.
We refer results of Anderson--Hughes--Roos--Seeger \cite{AHRS2021} and Roos--Seeger \cite{RoSe2023}, which concern $L^p$-$L^q$ estimates ($L^p$ improving) of the sphercial maximal functions with $E\subset[1,2]$.
They proved that the sharp $L^p$-$L^q$ bounds is closely related to the Minkowski and the Assouad dimensions of $E$.
We end this section with an example which is not given by the Fourier transform of a compactly supported finite Borel measure.

\begin{example}\,

\begin{enumerate}
    \item Let a multiplier $m$ and a set $E\subset (0,\infty)$ be given as in Theorem~\ref{thm_DuoVar} $(2)$.
    For $\phi$ in the above of \eqref{psi}, denote $m_1:=m\widehat{\phi}$ and $m_2:=m-m_1$.
    Then, since $m_1\in C^{[d/2]+2}(\bR^d)$ is supported on a compact set, $\cM_{m_1}^{\bR_+}$ is bounded on $L^p(\bR^d)$ for $p\in(1,\infty)$, hence so is $\cM_{m_1}^{E}$ (see \cite[Corollary 1.2]{LeeSeo_2023}).
    For $m_2$, note that $|\partial^\alpha m_2(\xi)|\lesssim |\xi|^{-a}$ for all $|\alpha|\leq [d/2]+2$ and $m_2\equiv 0$ on $B(0,1)$, then we obtain that for all $q\in(1,\infty]$ and $r\in (0,[d/2]+2]$,
    \begin{alignat*}{2}
    &\|m_2(2^j\,\cdot\,)\widehat{\psi}\|_{B_{q}^r}\lesssim 2^{j(r-a)}\quad &&\text{if $j\geq 0$}\,;\\
    &\|m_2(2^j\,\cdot\,)\widehat{\psi}\|_{B_{q}^s}=0\quad &&\text{if $j\leq -1$}\,.
    \end{alignat*}
    Consequently, $m_2\in \Sigma(B_{q}^r)$ for $r\in(0,a)$ and $q\in (1,\infty]$, and therefore $\cM_{m_2}^E$ is bounded on $L^p(\bR^d)$ for $p$ in \eqref{240516302} (see Remark~\ref{rmk_main}).

    \item For constants $0<\alpha<1$ and $\beta>0$, 
    let $\mathfrak{m}_{\alpha,\beta} (\xi):= \mathrm{e}^{2\pi i |\xi|^\alpha} m_\beta(\xi)$, where $m_\beta$ is a multiplier vanishing near the origin and satisfying that
    $$
    \left|\partial^\gamma m_\beta(\xi)\right|\lesssim |\xi|^{-\beta-|\gamma|}\quad\text{for any multi-index $\gamma$}\,.
    $$
    Then $\mathfrak{m}_{\alpha,\beta}$ satisfies $|\partial^\gamma \mathfrak{m}_{\alpha, \beta}(\xi)| \lesssim |\xi|^{-\beta-|\gamma|(1-\alpha)}$, which is neither a Mikhlin-type multiplier nor a multiplier of limited decay.
    We note that $\mathfrak{m}_{\alpha,\beta}$ is a variant of singular Fourier multipliers of \cite{Mi1980}.
    By Theorem~\ref{thm_main} and Remark~\ref{rmk_main}, one can check that
    \begin{align}\label{ineq_240405}
        \|\mathcal{M}_{\mathfrak{m}_{\alpha, \beta}} \|_{L^p(\bR^d) \to L^p(\bR^d)}<\infty,
    \end{align}
    whenever $\frac{d-2\beta/\alpha}{2(d-\kappa(E))}<\frac{1}{p}<\frac{d+2\beta/\alpha - \kappa(E)}{2d}$.

    \item Using \eqref{ineq_240405} and the proof of Proposition~1.5 in \cite{LeeSeo_2023}, one can also obtain the following convergence result: 
    Let $\alpha\in(0,1)$, $\beta\in\big(\kappa(E)/2,1\big)$. 
    Suppose $f\in \dot{L}_{\alpha\beta}^p$ for $\frac{d-2\beta}{2(d-\kappa(E))}<\frac{1}{p}<\frac{d+2\beta-\kappa(E)}{2d}$.
    Then we have
    \begin{align*}
        \left|\mathrm{e}^{-it(-\Delta)^{\alpha/2}}f(x) - f(x)\right|=O(t^\beta),\quad t\to 0\text{ on }E\,,
    \end{align*}
    where $\mathrm{e}^{-it(-\Delta)^{\alpha/2}}f$ denotes a solution to fractional half-wave equations with initial data $f$.
\end{enumerate}
\end{example}
We note that $(3)$ of Examples recovers the result of \cite[Remark 1.3]{CKKL2023} when $\alpha\in(0,1)$ and $d\geq2$.
Indeed, the range of $s$ in the case in \cite[Remark 1.3]{CKKL2023} is $s>\frac{r\alpha}{2(1+r)}$ for $\{t_n\}_{n=1}^\infty\in \ell^{r,\infty}$ (for example, $t_n = n^{-1/r}$).
For $L^2$ bounds, we can translate $t_n$ by $1+t_n$, so denote $E=\{1+t_n\}_{n=1}^\infty$. 
By $(3)$, the $L^2$ bounds hold for $f\in L_s^2(\bR^d)$ with $s>\frac{\alpha \kappa(E)}{2}$.
Since $\kappa(E)\leq\frac{r}{1+r}$ (see, \textit{e.g.}, Corollary~\ref{240704231}), we obtain the desired result.

\subsubsection*{Notation}  Throughout the paper, by $A \lesssim B$ we mean $A \le CB$ for harmless constants $C$. We also use $A \simeq B$ when $A\lesssim B$ and $B\lesssim A$.

\section{Preliminaries}\label{preliminaries}

\subsection{Properties of $\Sigma^2$ spaces}
We begin with definitions and properties of Besov spaces.
Let $\phi$ and $\psi$ be functions defined in \eqref{psi} and its above.
We denote $B_{p}^s:=B_{p}^s(\mathbb R^d):=B_{p,p}^s(\bR^d)$ a Besov space defined by
\[
 B_{p,p}^s(\bR^d):= \Big\{f\in \mathscr S'(\mathbb R^d) : \Big(\| \phi \ast f\|_p^p+ \sum_{j\geq 1} 2^{sjp}\| \psi_j \ast f\|_p^p\Big)^{1/p} <\infty\Big\}
\]
for $1\leq p<\infty$, and 
\[
 B_{\infty,\infty}^s(\bR^d):= \Big\{f\in \mathscr S'(\mathbb R^d) : \| \phi \ast f\|_\infty+ \sup_{j\geq 1} \left(2^{sj}\| \psi_j \ast f\|_\infty\right) <\infty\Big\}\,;
\]
here $\mathscr S'$ means a class of tempered distributions.
For properties of Besov spaces, we mainly recommend \cite[Section 2.3]{Tri1978} (see also \cite{BerLof1976, FJW1956, Gra2014}).
Note that
\begin{align}\label{holder-besov}
\begin{alignedat}{2}
    &L^2_s(\mathbb R^d)=B_2^s(\mathbb R^d)\quad &&\text{for all $s\in\bR$}\\
    &C^{n,s'}(\mathbb R^d)=B_\infty^{n+s'}(\mathbb R^d) \quad&&\text{for all $n\in\bN\cup\{0\}$ and  $s'\in(0,1)$;}
\end{alignedat}
\end{align}
for the last equality, see \cite[Section 2.7]{Tri1978} or \cite[Remark 2.2.2]{Gra2014}.
Here, $L^2_s(\bR^d)$ denotes the Bessel potential space $(1-\Delta)^{-s/2}L^2(\bR^d)$ equipped with the norm
$$
\|f\|_{L^2_s(\mathbb{R}^d)}:=\left(\int_{\mathbb R^d}\left|(1-\Delta)^{s/2}f(x)\right|^2\dd x\right)^{1/2}\simeq \left(\int_{\mathbb{R}^d}\left|(1+|\xi|^2)^{s/2}\widehat{f}(\xi)\right|^2\dd \xi\right)^{1/2}\,,
$$
and 
$C^{n,s'}(\mathbb R^d)$ denotes a H\"older space equipped with the norm
$$
    \|f\|_{C^{n,s'}(\bR^d)}:=\sum_{|\gamma|\leq n}\sup_{x\in\bR^d}|\partial^\gamma f(x)|+\sum_{|\gamma|=n}\sup_{x,y\in\bR^d,x\neq y}\frac{|\partial^\gamma f(x)-\partial^\gamma f(y)|}{|x-y|^{s'}}\,.
$$

We now introduce some useful properties of $\Sigma^2$ spaces defined by \eqref{vv Banach}.
\begin{lemma}\label{231109931}
\,

\begin{enumerate} 
    \item For any $1<p\leq\infty$ and $s\in\bR$, $\Sigma^2(B_p^s)$ is a Banach space. 

    \item For any $l=0,\,1,\,\ldots$,
    $$
    \|m\|_{\Sigma^2(L^2_l)}^2:=\|m\|_{\Sigma^2(B_2^l)}^2\simeq \sum_{|\gamma|\leq l}\int_{\bR^d}|\partial^\gamma m(\xi)|^2|\xi|^{2|\gamma|-d}\dd \xi\,.
    $$
    
    \item 
    For any $1<p\leq\infty$ and $s\in\bR$,
    $$
    \sup_{r>0}\|m(r\,\cdot\,)\|_{\Sigma^2(B^s_p)}\lesssim \|m\|_{\Sigma^2(B^s_p)}\quad\text{and}\quad \|\xi\cdot \nabla m(\xi)\|_{\Sigma^2(B_p^{s})}\lesssim \|m\|_{\Sigma^2(B_p^{s+1})}\,,
    $$
where $\xi\cdot \nabla:=\xi_1\partial_{\xi_1}+\cdots+\xi_d\partial_{\xi_d}$.
    
    \item Let $p_0,\,p_1\in (1,\infty]$, $s_0,\,s_1\in\bR$. 
    For any $0<\theta<1$, 
    $$  \big[\Sigma^2(B_{p_0}^{s_0}),\Sigma^2(B_{p_1}^{s_1})\big]_\theta=\Sigma^2\big(B_{p_\theta}^{s_\theta}\big)\,,
    $$
    where $[X,Y]_\theta$ is an complex interpolation space, $\frac{1}{p_\theta}:=\frac{1-\theta}{p_0}+\frac{\theta}{p_1}$, and $s_\theta:=(1-\theta)s_0+\theta s_1$. 
\end{enumerate}
\end{lemma}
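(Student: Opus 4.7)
All four assertions exploit the single observation that each piece $m(2^j\,\cdot\,)\wh\psi$ is supported in the annulus $\{|\xi|\sim 1\}$, while $\{\wh\psi(\,\cdot\,/2^j)\}_{j\in\Z}$ is a smooth Littlewood--Paley partition of unity on $\bR^d\setminus\{0\}$ (telescoping from the definition $\wh\psi_j=\wh\phi(\,\cdot\,/2^j)-\wh\phi(\,\cdot\,/2^{j-1})$).  The plan is to unpack (1) and (2) directly from the definition, to reduce (3) to a change of variable that turns a dilation by $r$ into a shift of the $j$-index, and to derive (4) from a retraction/coretraction argument in complex interpolation.

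\textbf{Parts (1) and (2).}
For (1), given a Cauchy sequence $\{m_n\}\subset\Sigma^2(B_p^s)$, each coordinate $\big(m_n(2^j\,\cdot\,)\wh\psi\big)_n$ is Cauchy in $B_p^s$; call its limit $h^{(j)}$, supported in $\{|\xi|\sim 1\}$.  The glued distribution $m(\xi):=\sum_j h^{(j)}(\xi/2^j)$ is well-defined in $\cD'(\bR^d\setminus\{0\})$ since the sum is locally finite, and the Fatou inequality applied to the $\ell^2$-sum shows $m_n\to m$ in $\Sigma^2(B_p^s)$.  For (2), use $B_2^l=L^2_l$ and the Leibniz expansion of $\di^\gamma[m(2^j\,\cdot\,)\wh\psi]$.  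The principal term $2^{j|\gamma|}(\di^\gamma m)(2^j\,\cdot\,)\wh\psi$ has $L^2$-norm squared equal to $2^{j(2|\gamma|-d)}\int|\di^\gamma m(\eta)|^2|\wh\psi(\eta/2^j)|^2\dd\eta$ after the substitution $\eta=2^j\xi$; summing in $j$ and using $2^j\simeq|\eta|$ on the support of $\wh\psi(\eta/2^j)$ together with $\sum_j|\wh\psi(\eta/2^j)|^2\simeq 1$ yields $\int|\di^\gamma m(\eta)|^2|\eta|^{2|\gamma|-d}\dd\eta$.  The lower-order Leibniz terms produce only indices $|\beta|<|\gamma|$ and are absorbed in the final sum over $|\gamma|\leq l$; the matching lower bound is obtained by induction on $|\gamma|$, inverting the top-order Leibniz identity on $\mathrm{supp}\,\wh\psi$.

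\textbf{Part (3).}
For the dilation bound, factor $r=2^{j_0}v$ with $j_0\in\Z$ and $v\in[1,2)$, so $m(r\,\cdot\,)(2^j\xi)\wh\psi(\xi)=m(2^{j+j_0}v\xi)\wh\psi(\xi)$.  A change of variable $\eta=v\xi$ (bi-Lipschitz uniformly for $v\in[1,2]$, hence bounded on $B_p^s$) transforms this into $m(2^{j+j_0}\eta)\wh\psi(\eta/v)$; since $\wh\psi(\,\cdot\,/v)$ is still supported in $\{|\eta|\sim 1\}$, a finite smooth decomposition against $\wh\psi(\eta/2^i)$, $|i|\leq C$, gives $\|m(r\cdot 2^j\,\cdot\,)\wh\psi\|_{B_p^s}\lesssim\sum_{|i|\leq C}\|m(2^{j+j_0+i}\,\cdot\,)\wh\psi\|_{B_p^s}$.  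Squaring and summing in $j$ absorbs $j_0$ and the finite $i$-sum.  For the derivative estimate, the chain rule gives $(\xi\cdot\n m)(2^j\xi)=\xi\cdot\n[m(2^j\xi)]$, and Leibniz yields
\[
    (\xi\cdot\n m)(2^j\,\cdot\,)\wh\psi=\xi\cdot\n\big[m(2^j\,\cdot\,)\wh\psi\big]-m(2^j\,\cdot\,)\big(\xi\cdot\n\wh\psi\big).
\]
The first term is $\lesssim\|m(2^j\,\cdot\,)\wh\psi\|_{B_p^{s+1}}$ in $B_p^s$ (one derivative costs one unit of smoothness and $\xi$ is bounded on the annular support); the second has the same form as the defining pieces with cutoff $\xi\cdot\n\wh\psi$, again Schwartz and supported in $\{|\xi|\sim 1\}$, so the first half of (3) applies.

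\textbf{Part (4) and the main obstacle.}
This is the main structural step, handled by a retraction/coretraction argument.  Define the isometric coretraction $R:\Sigma^2(B_p^s)\to\ell^2(\Z;B_p^s)$ by $Rm=(m(2^j\,\cdot\,)\wh\psi)_j$, and pick a Schwartz function $\wh{\psi^*}$ equal to $1$ on $\mathrm{supp}\,\wh\psi$ and supported in a slightly larger annulus.  Define the synthesis $S:(g_j)_j\mapsto\sum_j g_j(\,\cdot\,/2^j)\wh{\psi^*}(\,\cdot\,/2^j)$.  The identity $SR=\mathrm{id}$ follows from $\wh{\psi^*}\wh\psi=\wh\psi$ and $\sum_j\wh\psi(\xi/2^j)=1$ on $\bR^d\setminus\{0\}$.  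To show $S$ is bounded into $\Sigma^2(B_p^s)$, compute $S(g)(2^k\,\cdot\,)\wh\psi$: only indices $j$ with $|k-j|\leq C$ contribute (by the almost-disjoint annular supports of $\wh{\psi^*}(\,\cdot\,/2^j)$ against $\wh\psi$), and each surviving term is controlled in $B_p^s$ by $\|g_j\|_{B_p^s}$ via the change-of-variable estimates of (3).  Once $R$ and $S$ are in place, the retraction/coretraction theorem identifies $[\Sigma^2(B_{p_0}^{s_0}),\Sigma^2(B_{p_1}^{s_1})]_\theta$ as the image under $S$ of $[\ell^2(\Z;B_{p_0}^{s_0}),\ell^2(\Z;B_{p_1}^{s_1})]_\theta$; combining this with the classical identities $[\ell^2(X_0),\ell^2(X_1)]_\theta=\ell^2([X_0,X_1]_\theta)$ and $[B_{p_0}^{s_0},B_{p_1}^{s_1}]_\theta=B_{p_\theta}^{s_\theta}$ gives $\Sigma^2(B_{p_\theta}^{s_\theta})$.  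The main obstacle lies in producing the cutoff $\wh{\psi^*}$ so that both $SR=\mathrm{id}$ and the $\ell^2$-to-$\Sigma^2$ bound for $S$ hold simultaneously, which depends on the same annular change-of-variable bookkeeping as (3) and must be carried out uniformly in $p$ and $s$ for the interpolation to go through.
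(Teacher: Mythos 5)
Your proposal is correct in outline, but it is considerably more self-contained than the paper: the paper only proves part (3) in full, handling (1) and (4) by citation to \cite{LeeSeo_2023} (together with the Besov interpolation identity $[B_{p_0}^{s_0},B_{p_1}^{s_1}]_\theta=B_{p_\theta}^{s_\theta}$ from \cite{BerLof1976}) and (2) by citation to \cite{Lo1}. For (3) your argument is essentially the paper's: you factor $r=2^{j_0}v$ with $v\in[1,2)$ and reduce to finitely many shifted cutoffs, exactly as the paper does via $\sum_{|n|\leq 2}\wh{\psi}_n(r'\,\cdot\,)\equiv 1$ on $\mathrm{supp}\,\wh{\psi}$; for the derivative bound the paper performs the same Leibniz splitting, moving $\partial_{\xi_k}$ onto $m(2^j\,\cdot\,)\xi_k\wh{\psi}$ and absorbing the commutator term with nearby cutoffs, so the two treatments agree up to bookkeeping. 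Where you genuinely diverge is in replacing the citations by direct arguments: the gluing/Fatou proof of completeness for (1), the Leibniz--Littlewood--Paley computation for (2), and the retraction/coretraction scheme for (4) with $Rm=(m(2^j\,\cdot\,)\wh{\psi})_j$, a synthesis $S$ built from a fattened cutoff, and the classical identities $[\ell^2(X_0),\ell^2(X_1)]_\theta=\ell^2([X_0,X_1]_\theta)$ and $[B_{p_0}^{s_0},B_{p_1}^{s_1}]_\theta=B_{p_\theta}^{s_\theta}$; this buys a verifiable, self-contained lemma at the cost of length, and is very likely the same mechanism hidden behind the reference for (4). The only thin spot is the lower bound in (2), which you dispatch with ``induction on $|\gamma|$, inverting the top-order Leibniz identity'': it is cleaner, and avoids induction altogether, to insert a cutoff $\wh{\psi^*}\equiv 1$ on a neighbourhood of $\mathrm{supp}\,\wh{\psi}$, observe that $\partial^\gamma\big[m(2^j\,\cdot\,)\big]\wh{\psi}=\partial^\gamma\big[m(2^j\,\cdot\,)\wh{\psi^*}\big]\wh{\psi}$ there, and then bound $\|m(2^j\,\cdot\,)\wh{\psi^*}\|_{L^2_l}$ by finitely many shifted pieces exactly as in (3); as written the step is believable and does not constitute a genuine gap.
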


\begin{proof}
(1) and (4) are provided in \cite[Subsection 1.2]{LeeSeo_2023}; note the interpolation result of Besov spaces,
$$
    [B_{p_0}^{s_0}, B_{p_1}^{s_1}]_\theta = B_{p_\theta}^{s_\theta}
$$
(see, \textit{e.g.}, \cite[Theorem~6.4.5, (6)]{BerLof1976}).
(2) is provided in \cite[Proposition 2.2]{Lo1}.

    (3) To prove the first inequality, let $r=2^{j_0}r'$, where $j\in\bZ$ and $r'\in[1,2)$.
    Note that for any $r'\in[1,2)$,
    $$
    \sum_{|n|\leq 2}\wh{\psi}_{n}(r'\,\cdot\,)\equiv 1\quad \text{on}\quad \mathrm{supp}(\wh{\psi})\,,
    $$
    which implies that
    \begin{align*}
        &\|m(r\,\cdot\,)\|_{\Sigma^2(B_p^s)}^2=\sum_{j\in\Z} \big\| m(r2^j \,\cdot\,) \wh{\psi} \big\|_{B_p^s}^2=\sum_{j\in\Z}\bigg\| m(r'2^j \,\cdot\,) \,\sum_{|n|\leq 2}\wh{\psi}_{n}(r'\,\cdot\,)\,\wh{\psi}\bigg\|_{B_p^s}^2\,.
    \end{align*}
From the definition of $B^s_p$, we obtain that
\begin{align*}
\bigg\| m(r'2^j \,\cdot\,) \,\sum_{|n|\leq 2}\wh{\psi}_{n}(r'\,\cdot\,)\,\wh{\psi}\bigg\|_{B_p^s}\lesssim\,& \sum_{|n|\leq 2}\big\| m(r'2^j \,\cdot\,)\wh{\psi}_{n}(r'\,\cdot\,)\big\|_{B_p^s}\\
=\,&\sum_{|n|\leq 2}\big\| m(r'2^{j} \,\cdot\,)\wh{\psi}(r'2^{-n}\,\cdot\,)\big\|_{B_p^s}\\
\lesssim\,&\sum_{|n|\leq 2} \big\| m(2^{j+n} \,\cdot\,)\wh{\psi}\big\|_{B_p^s}
\end{align*}
(note that $1\leq r'<2$).
Therefore we have
\begin{align*}
\sup_{r>0}\|m(r\,\cdot\,)\|_{\Sigma^2(B_p^s)}^2\,&=\sup_{r'\in[1,2)}\sum_{j\in\Z}\bigg\| m(r'2^j \,\cdot\,) \,\sum_{|n|\leq 2}\wh{\psi}_{n}(r'\,\cdot\,)\,\wh{\psi}\bigg\|_{B_p^s}^2\\
&\lesssim \sum_{j\in\bZ}\big\| m(2^j \,\cdot\,)\wh{\psi}\big\|_{B_p^s}^2\,.
\end{align*}

Next, we prove the second inequality.
For fixed $k=1,\,\ldots,\,d$, we have 
\begin{align}\label{240214634}
&\big\|\big(\xi_k \partial_{\xi_k} m\big)(2^j\,\cdot\,)\wh{\psi}\big\|_{B_p^s}\nonumber\\
=\,&\big\|\partial_{\xi_k}\big( m(2^j\,\cdot\,)\big) \cdot\xi_k\wh{\psi}\big\|_{B_p^s}\\
\leq\, &\big\|\partial_{\xi_k}\big(m(2^j\,\cdot\,) \xi_k\wh{\psi}\,\big)\big\|_{B_p^{s}}+\big\|m(2^j\,\cdot\,) \partial_{\xi_k}\big(\xi_k\wh{\psi}\,\big)\big\|_{B_p^s} \nonumber\\
\lesssim \,&\|m(2^j\,\cdot\,) \xi_k\wh{\psi}\|_{B_p^{s+1}}+
\|m(2^j\,\cdot\,) \partial_{\xi_k}\big(\xi_k\wh{\psi}\,\big)\|_{B_p^{s+1}}\,.\nonumber
\end{align}
Since $\xi_k\wh{\psi}(\xi)$ is smooth and $\sum_{|n|\leq 1}\wh{\psi}_{n}\equiv 1$ on $\mathrm{supp}\, \wh{\psi}$, we obtain that
\begin{align}\label{240214635}
\begin{split}
&\|m(2^j\,\cdot\,) \xi_k\wh{\psi}\|_{B_p^{s+1}}+
\|m(2^j\,\cdot\,) \partial_{\xi_k} \big(\xi_k\wh{\psi}\big)\|_{B_p^{s+1}} \\
\lesssim \,&\sum_{|n|\leq 1}\|m(2^j\,\cdot\,) \wh{\psi}_n\|_{B_p^{s+1}}\simeq \sum_{|n|\leq 1}\|m(2^{j+n}\,\cdot\,) \wh{\psi}\|_{B_p^{s+1}}\,.
\end{split}
\end{align}
\eqref{240214634} and \eqref{240214635} imply
\begin{align*}
\|\xi_k\partial_{\xi_k} m(\xi)\|_{\Sigma^2(B_p^{s})}^2&=\sum_{j\in\bZ}\|\big(\xi_k\partial_{\xi_k} m\big)(2^j\,\cdot\,)\wh{\psi}\|_{B_p^{s}}^2\\
&\lesssim \sum_{j\in\bZ}\|m(2^j\,\cdot\,)\wh{\psi}\|_{B_p^{s+1}}^2\\
&=\|m\|_{\Sigma^2(B_p^{s+1})}^2\,.
\end{align*}
Therefore the proof is completed.
\end{proof}

\subsection{Fractional calculus}
We define the Riemann--Liouville integrals and fractional derivatives of order $\alpha \in (0,1)$. 
For $t>0$,
\begin{align*}
  I_{0+}^\alpha f(t) := \frac{1}{\Gamma(\alpha)} \int_0^t (t-s)^{\alpha-1} f(s) ds\quad,\quad  D_{0+}^{\alpha}F(t) := \frac{d}{dt}\big(I_{0+}^{1-\alpha}F\big)(t)
\end{align*}
where $f$ is a locally integrable function on $[0,\infty)$ and $F$ satisfies $I_{0+}^{1-\alpha}F$ is absolutely continuous.
For further use of $I_{0+}^\alpha$ and $D_{0+}^\alpha$, we need the following lemma:
\begin{lemma}[Lemma~2.1, \cite{LeeSeo_2023}]\label{lem_fraccal}
Let $F\in C_{loc}\big([0,\infty)\big)\cap C^{0,\beta}_{loc}\big((0,\infty)\big)$ for a fixed $\beta\in(0,1]$.
Then for any $\alpha\in(0,\beta)$, $D^{\alpha}_{0+}F$ satisfies the following identity:
\begin{align}\label{220514332}
D^{\alpha}_{0+}F(t)=\frac{1}{\Gamma(1-\alpha)}\left(\frac{F(t)}{t^{\alpha}}+\alpha\int_0^t\frac{F(t)-F(s)}{(t-s)^{1+\alpha}}ds\right)\,.
\end{align}
Moreover, we have
\begin{align}\label{220514330}
F(t)=I_{0+}^{\alpha}D_{0+}^{\alpha}F(t)\,.
\end{align}
\end{lemma}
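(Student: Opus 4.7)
The plan is to prove the two identities by direct computation, exploiting the Hölder regularity $F\in C^{0,\beta}_{loc}((0,\infty))$ with $\beta>\alpha$ to tame the singular integrals that appear.

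For \eqref{220514332}, I would fix $t>0$, set $\Phi(t):=I^{1-\alpha}_{0+}F(t)$, and compute the forward difference $\Phi(t+h)-\Phi(t)$ directly from the integral definition. Splitting the resulting expression as
\[
\int_t^{t+h}(t+h-s)^{-\alpha}F(s)\,ds+\int_0^t\bigl[(t+h-s)^{-\alpha}-(t-s)^{-\alpha}\bigr]F(s)\,ds,
\]
I would decompose $F(s)=F(t)+(F(s)-F(t))$ in each piece. The $F(t)$-parts both contain singular $h^{1-\alpha}$ contributions which cancel, leaving the clean remainder $F(t)\bigl[(t+h)^{1-\alpha}-t^{1-\alpha}\bigr]/\Gamma(2-\alpha)$; dividing by $h$ and letting $h\to 0$ yields the first summand $F(t)/(\Gamma(1-\alpha)t^{\alpha})$. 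The $(F(s)-F(t))$-parts are controlled by the bound $|F(s)-F(t)|\le C|s-t|^{\beta}$: the boundary piece is $O(h^{1+\beta-\alpha})$ and hence vanishes after division by $h$, while the bulk piece, rewritten via
\[
(t+h-s)^{-\alpha}-(t-s)^{-\alpha}=-\alpha\int_0^h(t+r-s)^{-\alpha-1}\,dr,
\]
converges by dominated convergence (with integrable majorant $C(t-s)^{\beta-\alpha-1}$) to the Marchaud-type integral on the right-hand side of \eqref{220514332}.

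For \eqref{220514330}, the plan is to use the semigroup identity $I^{\alpha}_{0+}\circ I^{1-\alpha}_{0+}=I^{1}_{0+}$, which I would verify by Fubini together with the beta-function evaluation $\int_s^t(t-r)^{\alpha-1}(r-s)^{-\alpha}\,dr=B(\alpha,1-\alpha)=\Gamma(\alpha)\Gamma(1-\alpha)$. The hypothesis ensures $\Phi=I^{1-\alpha}_{0+}F$ is absolutely continuous with $\Phi(0)=0$ (the latter from the trivial bound $|\Phi(t)|\le \|F\|_{L^\infty_{loc}}\,t^{1-\alpha}/\Gamma(2-\alpha)$ as $t\to 0$). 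Applying $I^{1-\alpha}_{0+}$ to the putative identity then yields
\[
I^{1-\alpha}_{0+}\bigl[I^{\alpha}_{0+}D^{\alpha}_{0+}F\bigr](t)=I^{1}_{0+}[\Phi'](t)=\Phi(t)=I^{1-\alpha}_{0+}F(t),
\]
so $I^{1-\alpha}_{0+}$ annihilates the difference $I^{\alpha}_{0+}D^{\alpha}_{0+}F-F$. A second application of the semigroup identity reduces injectivity of $I^{1-\alpha}_{0+}$ on continuous functions to the statement $I^{1}_{0+}G\equiv 0\Rightarrow G\equiv 0$, which is the fundamental theorem of calculus; since both sides of \eqref{220514330} are continuous in $t$, this forces the equality.

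The main obstacle will be the error analysis in \eqref{220514332}: verifying the cancellation of the $h^{1-\alpha}$-terms between the boundary piece and the kernel-difference piece, and then carefully applying dominated convergence so that the hypothesis $\beta>\alpha$ is genuinely used to produce the integrable majorant $(t-s)^{\beta-\alpha-1}$. Once these bookkeeping steps are in place, the remainder of both proofs reduces to standard Fubini and cancellation arguments.
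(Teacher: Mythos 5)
You should first note that the paper does not actually prove this lemma: it is imported verbatim from \cite{LeeSeo_2023} (Lemma~2.1 there), with the authors only correcting the erroneous extra term $\frac{t^\alpha}{\Gamma(\alpha)}F(0)$ in \eqref{220514330}. So the comparison is against the cited proof, not one printed here. Your derivation of the Marchaud identity \eqref{220514332} is the standard computation and is essentially correct: the cancellation of the two $h^{1-\alpha}$ terms checks out, and the error analysis is right in spirit. Two small repairs: the uniform majorant $C(t-s)^{\beta-\alpha-1}$ is only legitimate for $s\in[t/2,t]$, since $C^{0,\beta}_{loc}\big((0,\infty)\big)$ provides no uniform H\"older seminorm down to $s=0$ --- on $[0,t/2]$ you should instead use boundedness of $F$ together with boundedness of the averaged kernel $\frac{1}{h}\int_0^h \alpha(t+r-s)^{-1-\alpha}\,dr\leq \alpha(t/2)^{-1-\alpha}$; and you compute only the right derivative of $\Phi=I^{1-\alpha}_{0+}F$, so you must either repeat the argument for $h<0$ or observe that the right derivative is continuous on $(0,\infty)$ and invoke the classical fact that a continuous function with continuous right derivative is $C^1$.

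The genuine gap is in your proof of \eqref{220514330}, at the sentence ``the hypothesis ensures $\Phi=I^{1-\alpha}_{0+}F$ is absolutely continuous.'' Local absolute continuity on compact subsets of $(0,\infty)$ does follow from \eqref{220514332}, but what your argument actually uses is absolute continuity on $[0,t]$, i.e.\ $D^\alpha_{0+}F=\Phi'\in L^1(0,t)$: this is needed for the fundamental-theorem step $I^1_{0+}\Phi'=\Phi$, for the Tonelli/Fubini justification of the semigroup identity applied to $G=D^\alpha_{0+}F$, and even for the absolute convergence of the left-hand side of \eqref{220514330}. This integrability does \emph{not} follow from the hypotheses: \eqref{220514332} only yields $|D^\alpha_{0+}F(s)|\lesssim s^{-\alpha}\sup_{[0,s]}|F| + s^{\beta-\alpha}\,[F]_{C^{0,\beta}([s/2,s])}$, and the local H\"older seminorm may blow up arbitrarily fast as $s\to 0^+$ because the H\"older assumption is only on the open half-line. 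Concretely, $F(s)=s^2\sin\big(e^{1/s}\big)$ satisfies all hypotheses (with $\beta=1$), yet a stationary-phase computation shows $D^\alpha_{0+}F(s)$ oscillates with envelope $\simeq s^{2-2\alpha}e^{\alpha/s}$ (differentiation produces the factor $e^{1/s}$, while fractional integration of order $1-\alpha$ of a frequency-$\lambda$ oscillation gains only $\lambda^{-(1-\alpha)}$), so $\Phi'$ is not integrable near the origin and $I^\alpha_{0+}D^\alpha_{0+}F(t)$ exists only as the improper limit $\lim_{\epsilon\to 0^+}\frac{1}{\Gamma(\alpha)}\int_\epsilon^t(t-s)^{\alpha-1}\Phi'(s)\,ds$. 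Your injectivity scheme therefore breaks exactly where you invoke FTC and Fubini; to close it one must run the whole argument on $[\epsilon,t]$ and pass to the limit using $\Phi(\epsilon)\to\Phi(0)=0$, controlling the truncation errors in the two kernels --- a step the proposal does not supply. (Under the stronger hypothesis $F\in C^{0,\beta}_{loc}\big([0,\infty)\big)$, i.e.\ H\"older control up to the origin, your bound gives $D^\alpha_{0+}F\in L^1_{loc}$ and your proof of \eqref{220514330} is complete as written.)
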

It should be noted that $\frac{t^\alpha}{\Gamma(\alpha)}F(0)$ is added in the right-hand side of \eqref{220514330} in Lemma~2.1 of \cite{LeeSeo_2023}, which is an error and should be replaced by Lemma~\ref{lem_fraccal}. 
We also note that arguments in \cite{LeeSeo_2023} are valid regardless of the error.
For further discussion on fractional calculus, we refer to \cite{Kip1960,Sam_Kil_Ma1993}.

\section{Proof of Theorem~\ref{thm_main}}

\subsection{Useful lemmas}

In this subsection, we present some lemmas that are useful in our proof and examples.
The first lemma provides a refined square function argument for maximal operators on fractal sets in $\bR_+$.
\begin{lemma}\label{lem_sqrftn}
Let $E\subset\bR_+$ be a set such that $\{2^j:j\in\bZ\}\subset E$, $\alpha$ and $\beta$ be positive constants such that $0<\beta<\alpha\leq \frac{1}{2}$, and $F\in C_{loc}\big([0,\infty)\big)\cap C^{0,\alpha+\varepsilon}_{loc}\big((0,\infty)\big)$ for some $\varepsilon>0$.
Then we have
    \begin{align*}
        \sup_{t\in E}|F(t)|^2\lesssim_{\alpha,\beta} \sum_{j\in\bZ}\int_1^2 d(s,\widetilde{E}_j)^{-1+2\beta}|D^\alpha_{0+}F_j(s)|^2\dd s\,,
    \end{align*}
    where $F_j(s):=F(2^js)$, $\widetilde{E}_j:=\big(2^{-j}E\big)\cap [1,2]$, and $d(s,A):=\inf_{r\in A}|s-r|$ for a set $A\subset \bR_+$.
\end{lemma}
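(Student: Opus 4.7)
The plan is to reduce $\sup_{t\in E}|F(t)|^2$ to a weighted $L^2$ estimate via a dyadic decomposition of the Riemann--Liouville integral representation and an application of Cauchy--Schwarz. Given $t\in E$, I write $t=2^J s_0$ with $J\in\bZ$ and $s_0\in[1,2)$; since $t\in E$ we have $s_0\in\widetilde{E}_J$. Applying Lemma~\ref{lem_fraccal} (with its Hölder exponent set equal to $\alpha+\varepsilon$) gives
$$F(t)=\frac{1}{\Gamma(\alpha)}\int_0^t (t-u)^{\alpha-1}D^{\alpha}_{0+}F(u)\,\dd u.$$
I then decompose $(0,t]=[2^J,t]\cup\bigcup_{j<J}[2^j,2^{j+1}]$, change variables $u=2^jr$ on each dyadic piece, and use the scaling identity $D^{\alpha}_{0+}F(2^jr)=2^{-j\alpha}D^{\alpha}_{0+}F_j(r)$. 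The factors of $2^j$ cancel and produce
$$\Gamma(\alpha)\,F(t)=\sum_{j<J}\int_1^2 K_j(r)\,D^{\alpha}_{0+}F_j(r)\,\dd r+\int_1^{s_0}(s_0-r)^{\alpha-1}D^{\alpha}_{0+}F_J(r)\,\dd r,$$
where $K_j(r):=(2^{J-j}s_0-r)^{\alpha-1}$.

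Cauchy--Schwarz with the weight $w_j(r):=d(r,\widetilde{E}_j)^{1-2\beta}$ then bounds $|F(t)|^2$ by
$$\bigg(\sum_{j\le J}\int_1^2 |K_j(r)|^2 w_j(r)\,\dd r\bigg)\bigg(\sum_{j\le J}\int_1^2 d(r,\widetilde{E}_j)^{-1+2\beta}\,|D^{\alpha}_{0+}F_j(r)|^2\,\dd r\bigg).$$
The second factor is dominated by the claimed RHS (after summing over all $j\in\bZ$), so the entire proof reduces to showing that the first (``kernel'') factor is $\lesssim_{\alpha,\beta}1$ uniformly in $J$, hence uniformly in $t\in E$.

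I would estimate the kernel factor by splitting the sum into three ranges. For $j\le J-2$ the kernel satisfies $K_j(r)\simeq 2^{(J-j)(\alpha-1)}$ on $[1,2]$; since $1\in\widetilde{E}_j$ we have $w_j(r)\le 1$, so these terms sum to the geometric series $\sum_{k\ge 2}2^{2k(\alpha-1)}<\infty$. For $j=J-1$ I exploit $2\in\widetilde{E}_{J-1}$ to bound $w_{J-1}(r)\le(2-r)^{1-2\beta}$; after substituting $u=2-r$ the relevant integral becomes $\int_0^1(2(s_0-1)+u)^{2\alpha-2}u^{1-2\beta}\,\dd u$, which splitting at $u=2(s_0-1)$ shows is bounded uniformly in $s_0\in[1,2)$ thanks to $2\alpha-2\beta>0$. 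For $j=J$ one uses $s_0\in\widetilde{E}_J$ so that $w_J(r)\le(s_0-r)^{1-2\beta}$, reducing the integrand to $(s_0-r)^{2\alpha-1-2\beta}$, integrable on $[1,s_0]$ precisely because $\alpha>\beta$.

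The main obstacle is the near-diagonal regime $j\in\{J-1,J\}$: there $K_j$ blows up at a point within (or at the boundary of) the range of $\widetilde{E}_j$, while the weight $w_j$ only tempers the singularity to order $2\alpha-1-2\beta$. The strict inequality $\beta<\alpha$ is exactly what makes the resulting one-dimensional singular integrals converge, and the hypothesis $\{2^j:j\in\bZ\}\subset E$ is used precisely to guarantee $\{1,2\}\subset\widetilde{E}_j$ for every $j$, which supplies the local comparisons $d(r,\widetilde{E}_j)\le\min\{r-1,\,2-r\}$ (and $\le|s_0-r|$ when $j=J$) needed to absorb the kernel singularity.
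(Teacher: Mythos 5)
Your proof is correct, and it rests on the same backbone as the paper's argument: the representation $F=I_{0+}^{\alpha}D_{0+}^{\alpha}F$ from Lemma~\ref{lem_fraccal}, the scaling identity $2^{j\alpha}\big(D_{0+}^{\alpha}F\big)(2^{j}\cdot)=D_{0+}^{\alpha}F_{j}$, Cauchy--Schwarz, and dyadic rescaling. The difference is in where the fractal weight is inserted and how the hypotheses on $E$ are exploited. The paper applies Cauchy--Schwarz once on all of $(0,t)$ against the auxiliary weight $(t-s)^{-1+2(\alpha-\beta)}s^{-2(\alpha-\beta)}$, whose integral converges because $0<2(\alpha-\beta)<1$; this leaves the factor $(t-s)^{-1+2\beta}$, which is converted to $d(s,E)^{-1+2\beta}$ by the one-line observation $d(s,E)\le t-s$ for $t\in E$, and only then does it decompose dyadically, using $\{2^{j}\}\subset E$ merely to identify $d(\cdot,2^{-j}E)$ with $d(\cdot,\widetilde{E}_{j})$ on $[1,2]$. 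You decompose first and feed the target weight $d(r,\widetilde{E}_{j})^{\pm(1-2\beta)}$ directly into Cauchy--Schwarz, which obliges you to prove the uniform bound on the kernel factor via the three regimes $j\le J-2$, $j=J-1$, $j=J$, using $1,2\in\widetilde{E}_{j}$ and $s_{0}\in\widetilde{E}_{J}$ to tame the singularity; those computations are correct, and the conditions $\beta<\alpha$ and $\{2^{j}\}\subset E$ enter exactly where needed (the degenerate case where $d(\cdot,\widetilde{E}_{j})$ vanishes on a set of positive measure is harmless, since there the right-hand side is either unaffected or infinite). The paper's ordering buys a shorter proof with no case analysis; your version makes explicit, block by block, how the weight absorbs the kernel singularity, at the cost of the extra bookkeeping.
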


\begin{proof}
By Lemma~\ref{lem_fraccal}, we obtain that for any $t\in E$,
\begin{align}
|F(t)|^2\leq &\left(\int_0^t(t-s)^{-1+\alpha}\left|D^\alpha_{0+}F(s)\right|\dd s\right)^2\nonumber\\
\leq &\int_0^t(t-s)^{-1+2\beta}s^{2(\alpha-\beta)}\left|D^\alpha_{0+}F(s)\right|^2\dd s\label{2404171134}\\
\lesssim &\int_0^\infty d(s,E)^{-1+2\beta}s^{2(\alpha-\beta)}\left|D^\alpha_{0+}F(s)\right|^2\dd s\nonumber\\
= &\sum_{j\in\bZ}\int_1^2 d(s,E_j)^{-1+2\beta}s^{2
(\alpha-\beta)}\left|2^{j\alpha}\big(D^\alpha_{0+}F\big)(2^js)\right|^2\dd s\,.\label{240409528}
\end{align}
Here, \eqref{2404171134} is implied by that because $0<2(\alpha-\beta)<1$,
$$
\int_0^t(t-s)^{-1+2(\alpha-\beta)}s^{-2(\alpha-\beta)}\dd s=N(\alpha,\beta)<\infty\,,
$$
and \eqref{240409528} follows from that $\{2^j\,:\,j\in\bZ\}\subset E$.
One can observe that
\begin{align*}
2^{j\alpha}\left(D_{0+}^\alpha F\right)(2^js)\,&=\frac{1}{\Gamma(1-\alpha)}\left(\frac{F(2^js)}{s^{\alpha}}+\alpha2^{j\alpha}\int_0^{2^js}\frac{F(2^js)-F(r)}{(2^js-r)^{1+\alpha}}\dd r\right)\\
&=\frac{1}{\Gamma(1-\alpha)}\left(\frac{F(2^js)}{s^{\alpha}}+\alpha\int_0^s\frac{F(2^js)-F(2^jr)}{(s-r)^{1+\alpha}}\dd r\right)\\
&=D_{0+}^\alpha F_j(s)\,,
\end{align*}
where $F_j(s)=F(2^js)$.
Since the factor of $s^{2(\alpha-\beta)}$ in \eqref{240409528} disappears simply because $s\in[1.2]$, we obtain that for $t\in E$,
\begin{align*}
|F(t)|^2\lesssim \sum_{j\in\bZ}\int_1^2 d(s,E_j)^{-1+2\beta}\left|D^\alpha_{0+}F_j(s)\right|^2\dd s\,.
\end{align*}
\end{proof}

The second lemma provides an equivalent definition of the Minkowski dimension of compact sets.
This lemma is motivated by the relation between the Assouad dimension and the Aikawa dimension, introduced in \cite{LeTu20213}.

\begin{lemma}\label{231109742}
    Let $E$ be a non-empty set in $[1,2]$.
    For any $a\in(0,1)$, 
    \begin{align}\label{240313416}
        \sup_{0<\delta\leq 1}\delta^a N(E,\delta)\lesssim_a \int_1^2d(t,E)^{-1+a}\dd t\lesssim_a 1+\int_0^1 \lambda^{a}N(E,\lambda)\frac{\dd \lambda}{\lambda}\,.
    \end{align}
    In particular, the Minkowski dimension of $E$ equals
    \begin{align}\label{240409516}
        \inf\Big\{b>0\,:\,\int_1^2d(t,E)^{-1+b}\dd t<\infty\Big\}\,.
    \end{align}
\end{lemma}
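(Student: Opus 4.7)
The plan is to use a layer-cake identity to express $\int_1^2 d(t,E)^{-1+a}\dd t$ in terms of the measures of $s$-neighborhoods of $E$, and then control those neighborhoods by the entropy numbers $N(E,s)$. Starting from the identity $d(t,E)^{a-1}=(1-a)\int_{d(t,E)}^\infty s^{a-2}\dd s$ and applying Fubini, I obtain
$$\int_1^2 d(t,E)^{-1+a}\dd t = (1-a)\int_0^\infty s^{a-2}\bigl|\{t\in[1,2]:d(t,E)<s\}\bigr|\dd s.$$
Both inequalities of \eqref{240313416} will be extracted from this single identity.

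For the right inequality, I would split the $s$-integral at $s=1$. The tail $s\geq 1$ contributes at most $(1-a)\int_1^\infty s^{a-2}\dd s=1$ since $|\{t\in[1,2]:d(t,E)<s\}|\leq 1$. For $s\leq 1$, covering $E$ by $N(E,s)$ intervals of length $s$ shows that the $s$-neighborhood of $E$ is contained in the corresponding $s$-enlargements, so $|\{t\in\R:d(t,E)<s\}|\leq 3sN(E,s)$. This gives $(1-a)\int_0^1 s^{a-2}\cdot 3sN(E,s)\dd s\lesssim_a \int_0^1\lambda^a N(E,\lambda)\tfrac{\dd\lambda}{\lambda}$. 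For the left inequality, fix $\delta\in(0,1]$ and consider the intervals $I_k=[k\delta,(k+1)\delta]$. By definition of $N(E,\delta)$ there are exactly $N(E,\delta)$ such intervals meeting $E$; since $E\subset[1,2]$, all but at most two lie inside $[1,2]$, and on each of these one has $d(t,E)\leq\delta$. Because $a-1<0$, this yields
$$\int_1^2 d(t,E)^{-1+a}\dd t\;\geq\;(N(E,\delta)-2)\cdot\delta\cdot\delta^{a-1}=(N(E,\delta)-2)\,\delta^a.$$
The residual case $N(E,\delta)\leq 4$ is handled directly: as $E\subset[1,2]$ is non-empty we have $d(t,E)\leq 1$ on $[1,2]$, so the integral is $\geq 1\gtrsim \delta^a N(E,\delta)$. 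Taking the supremum over $\delta$ completes the proof of \eqref{240313416}.

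The characterization \eqref{240409516} of $\dim_{\cM}(E)$ is then an immediate corollary. If $b>\dim_{\cM}(E)$, choose $b'\in(\dim_{\cM}(E),b)$ so that $N(E,\lambda)\lesssim\lambda^{-b'}$; the right estimate of \eqref{240313416} forces $\int_1^2 d(t,E)^{-1+b}\dd t<\infty$. Conversely, if this integral is finite for some $b>0$, the left estimate yields $\sup_{\delta\in(0,1]}\delta^bN(E,\delta)<\infty$, hence $\dim_{\cM}(E)\leq b$. Taking infima gives the equality. Overall the argument is a Cavalieri computation coupled with the standard $s$-enlargement of an $s$-cover; the only mildly delicate point is the boundary handling in the lower bound, which is why I isolate the small-$N$ case explicitly.
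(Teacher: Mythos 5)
Your argument is correct, and the two halves compare differently with the paper's proof. For the right-hand inequality you do essentially what the paper does: the layer-cake identity (its \eqref{231109722}) together with the observation that the $\lambda$-neighborhood of $E$ is covered by the threefold enlargements of the grid intervals meeting $E$, giving $|\{t\in[1,2]:d(t,E)<\lambda\}|\le 3\lambda N(E,\lambda)$, the tail $\lambda\ge 1$ contributing the constant $1$. For the left-hand inequality your route is genuinely more direct: you lower-bound the integral by restricting to the grid intervals of length $\delta$ that meet $E$ and lie inside $[1,2]$ (at least $N(E,\delta)-2$ of them, with disjoint interiors), on each of which $d(\cdot,E)\le\delta$ so the integrand is at least $\delta^{a-1}$, and you dispose of the residual case $N(E,\delta)\le 4$ using $d(t,E)\le 1$ on $[1,2]$. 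The paper instead runs the layer-cake representation a second time: it bounds $\delta N(E,\delta)\le|\{t>0:d(t,E)\le\delta\}|$ and compares this with the integral via monotonicity of the distribution function, absorbing the $2\delta$ boundary excess through the tail $\lambda\ge 1$ (see \eqref{240422418}). Your version is more elementary and even yields an absolute constant in place of $\lesssim_a$ for that half; the paper's version has the aesthetic advantage of staying entirely within the single integral identity. The deduction of \eqref{240409516} is the same in both; like the paper, you implicitly take $b<1$ when invoking \eqref{240313416}, which is harmless since for $b\ge 1$ the integral is trivially finite ($d(t,E)\le 1$ on $[1,2]$) and $\dim_{\cM}(E)\le 1$, so both quantities are at most $1$ and the infima are unaffected.
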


\begin{proof}
Observe that
\begin{align}\label{231109722}
    \int_1^2d(t,E)^{-1+a}\dd t&=(1-a) \int_0^\infty \lambda^{-2+a}\big|\{t\in[1,2]\,:\,d(t,E)<\lambda\}\big|\dd \lambda\,.
\end{align}
For a fixed $\delta\in(0,1]$, we denote 
$$
\cI_\delta:=\big\{[k\delta,(k+1)\delta]\,:\,[k\delta,(k+1)\delta]\cap E\neq \emptyset\big\}\,.
$$
One can observe that if $I\in\cI_\delta$, then $d(\cdot,E)\leq \delta$ on $I$.
This implies that
\begin{align}\label{240422418}
&\delta N(E,\delta)=\bigg|\bigcup_{I\in \cI_\delta}I\bigg| \nonumber\\
\leq \,\,&\big|\{t>0\,:\,d(t,E)\leq \delta\}\big|\\
\leq \,\,&
(1-a)\delta^{1-a}\int_\delta^\infty \lambda^{-2+a}\big|\{t\in [1,2]\,:\,d(t,E)<\lambda\}\big|\dd \lambda+2\delta\nonumber\\
\leq \,\,&3(1-a)\delta^{1-a}\int_0^\infty \lambda^{-2+a}\big|\{t\in [1,2]\,:\,d(t,E)<\lambda\}\big|\dd \lambda\,.\nonumber
\end{align}
Here, the last inequality follows from that because 
$$
|\{t\in [1,2]\,:\,d(t,E)<\lambda\}|=1\quad \text{for all}\quad \lambda\geq 1\,,
$$
we obtain that for any $0<\delta\leq 1$,
$$
\delta\leq (1-a)\delta^{1-a} \int_1^\infty \lambda^{-2+a}|\{t\in [1,2]\,:\,d(t,E)<\lambda\}|\dd \lambda\,.
$$
Combining \eqref{240422418} with \eqref{231109722}, the first inequality in \eqref{240313416} is proved.

To prove the second inequality in \eqref{240313416}, 
observe that for a fixed $\lambda>0$,
$$
\big\{t\in[1,2]\,:\,d(t,E)<\lambda\big\}\subset \bigcup_{I\in\cI_\lambda}\widetilde{I}\,,
$$
where $\widetilde{I}$ is an interval with the same center as $I$ and $\ell(\widetilde{I})=3\ell(I)$.
Due to the definition of $N(E,\delta)$, we have
$$
\big|\big\{t\in[1,2]\,:\,d(t,E)<\lambda\big\}\big|\leq \bigg|\bigcup_{I\in \cI_\lambda}\widetilde{I}\,\bigg|\leq 3\lambda N(E,\lambda)\,.
$$
By combining this with \eqref{231109722}, we obtain that
\begin{align*}
&\int_0^\infty \lambda^{-2+a}|\{t\in[1,2]\,:\,d(t,E)<\lambda\}|\dd \lambda\leq \frac{1}{1-a}+3\int_0^1\lambda^{-1+a}N(E,\lambda)\dd \lambda\,.
\end{align*}
Therefore the proof of \eqref{240313416} is completed, and
it directly follows from \eqref{240313416} that $\dim_\cM E$ equals \eqref{240409516}.
\end{proof}

We note that Lemma~\ref{231109742} still true for $E \subset [a,b]$ for arbitrary $a,b$.
Exactly same argument works for the case of $[a,b]$.

It is known that the sequence $\{1+n^{-1/r}\}_{n\in \bN}$ is of the Minkowski dimension $\frac{r}{1+r}$ (see, \textit{e.g.}, \cite[Examples]{SWW1995}).
We provide similar results for bounded decreasing sequences by using Lemma~\ref{231109742}.
As applications of Lemma~\ref{231109742}, we provide similar results for bounded decreasing sequences, which are not used in the proof of the main result.

\begin{corollary}\label{240704231}\,
Let $(t_n)_{n\in\bN}$ be a decreasing sequence with $t_n\rightarrow 0$.
\begin{enumerate}
    \item  $\dim_{\cM}\big(\{t_n\}_{n\in\bN}\big)$ equals the infimum of all $a>0$ such that
    $$
    \sum_{n=1}^\infty(t_n-t_{n+1})^a<\infty\,.
    $$

    \item Let $(t_n)_{n\in\bN}\in \ell^{r,\infty}(\bN)$ for $0<r<\infty$, \textit{i.e.}, there exists $N>0$ such that for any $\delta>0$, 
\begin{align}\label{240704226}
\#\{n\,:\,t_n\geq \delta\}\leq N\delta^{-r}\,.
\end{align}
Then $\dim_{\cM}\big(\{t_n\}_{n\in\bN}\big)\leq \frac{r}{1+r}$.
In particular, for any real valued $(a_n)_{n\in\bN}\in\ell^{r,\infty}(\bN)$, $\dim_{\cM}\big(\{a_n\}_{n\in\bN}\big)\leq \frac{r}{1+r}$\,.

    \item If we additionally assume that $(t_n-t_{n+1})_{n\in\bN}$ is also decreasing, then $\dim_{\cM}\big(\{t_n\}_{n\in\bN}\big)=\frac{r_0}{1+r_0}$, where $r_0$ is the infimum of all $r>0$ such that $(t_n)_{n\in\bN}\in \ell^{r,\infty}(\bN)$.
\end{enumerate}
\end{corollary}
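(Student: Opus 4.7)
The strategy in all three parts is to apply Lemma~\ref{231109742} (which still holds for $E\subset[0,t_1]$ by the remark following it) after reducing the quantity $\int d(t,E)^{-1+a}\dd t$ to a sum over the gaps of the sequence. Since $t_n\searrow 0$, the complement $(0,t_1)\setminus\{t_n\}$ decomposes as the disjoint union $\bigsqcup_{n\geq1}(t_{n+1},t_n)$, on each of which $d(\,\cdot\,,\{t_k\})$ is a tent function with peak $(t_n-t_{n+1})/2$; a direct symmetry computation gives
\begin{align*}
\int_0^{t_1} d(t,\{t_k\})^{-1+a}\dd t=\frac{1}{a\cdot 2^{a-1}}\sum_{n\geq 1}(t_n-t_{n+1})^{a}\,,
\end{align*}
which combined with the characterization \eqref{240409516} yields (1) at once.

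For (2), if $(t_n)\in \ell^{r,\infty}$, monotonicity forces $t_n\lesssim n^{-1/r}$. Group indices into dyadic blocks $B_k:=\{n:2^k\leq n<2^{k+1}\}$. Telescoping gives $\sum_{n\in B_k}(t_n-t_{n+1})=t_{2^k}-t_{2^{k+1}}\lesssim 2^{-k/r}$, and Jensen's inequality for the concave function $x\mapsto x^{b}$ (with $b\in(0,1)$) yields
\begin{align*}
\sum_{n\in B_k}(t_n-t_{n+1})^{b}\leq |B_k|^{1-b}\Big(\sum_{n\in B_k}(t_n-t_{n+1})\Big)^{b}\lesssim 2^{k(1-b(1+1/r))}\,.
\end{align*}
The $k$-sum converges precisely when $b>r/(1+r)$, so part~(1) delivers $\dim_{\cM}(\{t_n\})\leq r/(1+r)$. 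For the real-valued statement, apply this to the decreasing rearrangement of $(|a_n|)$ and use $\{a_n\}\subset\{|a_n|\}\cup\{-|a_n|\}$ together with the monotonicity and reflection invariance of $\dim_{\cM}$.

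For (3), write $b_0:=\inf\{b:\sum(t_n-t_{n+1})^b<\infty\}$, so by (1) it suffices to show $b_0=r_0/(1+r_0)$. For $r>r_0$ we have $t_n\lesssim n^{-1/r}$; since $(t_n-t_{n+1})$ is decreasing, telescoping over the upper half of indices gives $(n/2)(t_n-t_{n+1})\leq t_{\lceil n/2\rceil}\lesssim n^{-1/r}$, hence $(t_n-t_{n+1})\lesssim n^{-(1+1/r)}$ and $\sum(t_n-t_{n+1})^b<\infty$ for $b>r/(1+r)$; letting $r\downarrow r_0$ gives $b_0\leq r_0/(1+r_0)$. Conversely, for $b\in(b_0,1)$, monotonicity of $(t_n-t_{n+1})$ forces $n(t_n-t_{n+1})^b\leq\sum_{k\leq n}(t_k-t_{k+1})^b<\infty$, so $(t_n-t_{n+1})\lesssim n^{-1/b}$ and $t_n\lesssim\sum_{k\geq n}k^{-1/b}\lesssim n^{1-1/b}$, placing $(t_n)\in\ell^{b/(1-b),\infty}$; letting $b\downarrow b_0$ gives $r_0\leq b_0/(1-b_0)$, i.e.\ $b_0\geq r_0/(1+r_0)$.

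The principal subtlety is the exponent-sharpening step of (2): the naive estimate $(t_n-t_{n+1})^b\leq t_n^b\lesssim n^{-b/r}$ delivers only $\dim_{\cM}\leq r$, and one has to combine the decay of $t_n$ with the concavity gain $|B_k|^{1-b}$ from Jensen on dyadic blocks to push the exponent down to the sharp value $r/(1+r)$, which matches the model sequence $t_n=n^{-1/r}$.
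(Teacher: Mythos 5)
Your argument is correct, and its backbone coincides with the paper's: both reduce to Lemma~\ref{231109742} on an interval containing the sequence and identify $\int d(t,\{t_n\})^{-1+a}\,\dd t$ with $\sum_n(t_n-t_{n+1})^a$ (your exact tent-function computation is just a sharpened form of the paper's comparison \eqref{240704244}), after which part (1) is immediate from \eqref{240409516}. The differences lie in (2) and (3). In (2) the paper groups the gaps by dyadic levels of the \emph{values}, setting $N_{(k)}=\max\{n:\,t_n\ge 2^{-k}\}$ and applying H\"older within each level, whereas you first extract $t_n\lesssim n^{-1/r}$ and apply Jensen on dyadic blocks of \emph{indices}; the two are interchangeable and rest on the same concavity gain, so this is only a cosmetic variation. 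In (3) the paper proves the lower bound by estimating $\delta^{\alpha/(1-\alpha)}N_\delta$ directly via a H\"older-type splitting of $t_n=\sum_{k\ge n}(t_k-t_{k+1})$ that exploits the monotone gaps, while you argue pointwise: convergence of $\sum_n(t_n-t_{n+1})^b$ together with monotone gaps forces $t_n-t_{n+1}\lesssim n^{-1/b}$, hence $t_n\lesssim n^{1-1/b}$ and membership in weak $\ell^{b/(1-b)}$; this is more elementary and arguably more transparent, at the cost of an extra limiting step in $b$ and $r$ (and, like the paper, it tacitly treats $r_0$ as finite, the case $r_0=\infty$ being trivial). You also give a self-contained upper bound in (3) from the decreasing-gap hypothesis instead of citing (2), and your handling of the ``in particular'' claim in (2) via the decreasing rearrangement of $(|a_n|)$ and the inclusion $\{a_n\}\subset\{|a_n|\}\cup\{-|a_n|\}$ supplies a step the paper leaves implicit; both are fine.
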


\begin{proof}
(1) By translation and dilation, we assume that $t_1=1$ and $t_n\rightarrow 0$.
    Observe that
    \begin{align}
        \int_0^1 d\big(s,\{t_n\}\big)^{-1+a}\dd s = \,&\sum_{n=1}^\infty\int_{t_{n+1}}^{t_n}\max\big(|t_n-s|,|s-t_{n+1}|\big)^{-1+a}\dd s\nonumber\\
        \simeq \,&\sum_{n=1}^\infty(t_n-t_{n+1})^a\,.\label{240704244}
    \end{align}
    By Lemma~\ref{231109742} associated with the interval $[0,1]$, the proof is completed.

(2)
By rearranging, we assume that $(t_n)$ is strictly decreasing and converges to $0$.
    We denote $N_{(k)}:=\max\{n\,:\,t_n\geq 2^{-k}\}$.
    Then
    \begin{align*}
    \sum_{n=N_{(1)}+1}^\infty(t_{n}-t_{n+1})^a\,&=\sum_{k=1}^\infty\sum_{n=N_{(k)}+1}^{N_{(k+1)}}(t_n-t_{n+1})^a\\
    &\leq \sum_{k=1}^\infty\big(N_{(k+1)}-N_{(k)}\big)^{1-a}(t_{N_{(k)}+1})^a\leq \sum_{k=1}^\infty 2^{-ak}N_{(k+1)}^{1-a}\,,
    \end{align*}
    where the first inequality follows from the H\"older inequality.
It follows from \eqref{240704226} that $N_{(k)}\lesssim 2^{-kr}$.
Therefore the last term is finite if $a>\frac{r}{1+r}$.
Using Lemma~\ref{231109742} and \eqref{240704244}, we conclude that $\dim_{\cM}\big(\{t_n\}\big)\leq \frac{r}{1+r}$.

(3) By (2) of this lemma, we only need to prove that  $\dim_{\cM}\big(\{t_n\}\big)\geq \frac{r_0}{1+r_0}$.
Therefore we only prove that if $\dim_{\cM}\big(\{t_n\}\big)<\alpha<1$, then $(t_n)\in\ell^{\alpha/(1-\alpha),\infty}(\bN)$.
We denote $N_\delta:=\max\big\{n\in\bN\,:\,t_n\geq \delta\big\}$.
Since $t_n\rightarrow 0$ and $(t_n-t_{n+1})$ is decreasing, we have
\begin{align}
\delta^{\alpha/(1-\alpha)}N_\delta \,&\leq \sum_{n=1}^{N_\delta}t_n^{\alpha/(1-\alpha)}=\sum_{n=1}^{N_\delta}\bigg[\sum_{k=n}^\infty \big(t_k-t_{k+1}\big)\bigg]^{\alpha/(1-\alpha)}\nonumber\\
&\leq \sum_{n=1}^{\infty}\bigg[(t_n-t_{n+1})^{1-\alpha}\sum_{k=n}^\infty \big(t_k-t_{k+1}\big)^\alpha\bigg]^{\alpha/(1-\alpha)}\label{240704851}\\
&\leq \bigg[\sum_{k=1}^\infty \big(t_k-t_{k+1}\big)^\alpha\bigg]^{1/(1-\alpha)}\nonumber
\end{align}
Due to (1) of this lemma and that $\alpha>\dim_{\cM}\big(\{t_n\}\big)$, we obtain that the last term in \eqref{240704851} is finite.
This implies \eqref{240704226} with $r=\frac{\alpha}{1-\alpha}$.
\end{proof}

\subsection{Proof of Theorem~\ref{thm_main}}

We first note that for $E\subset \bR_+$ and 
$\widetilde{E}:=E\cup \{2^j\,;\,j\in\bZ\}$,
we have $\kappa(E)=\kappa\big(\widetilde{E}\big)$ and 
$$
\sup_{E}\left|T_{m(t\,\cdot\,)}f\right|\leq \sup_{\widetilde{E}}\left|T_{m(t\,\cdot\,)}f\right|\,.
$$
This implies that we only need to prove for the case of when
\begin{align}\label{240422520}
\{2^j\,:\,j\in\bZ\}\subset E\,.
\end{align}
Therefore we additionally assume \eqref{240422520}.

Noting that $\frac{d}{p_0}+\frac{\kappa(E)}{2}:=d\big|\frac{1}{2}-\frac{1}{p}\big|+\frac{\kappa(E)}{2}<s$, 
take $\alpha\in \left(\frac{\kappa(E)}{2},\frac{1}{2}\right)$ such that 
$$
\frac{d}{p_0}+\alpha<s\,.
$$
We assume that $m\in \Sigma^2(B_{p_0}^s)$.
There is an $\varepsilon_0>0$ such that $s \geq d/p_0 + \alpha +\varepsilon_0$ and $0<\alpha+\varepsilon_0<1$. 
Note from \cite[Theorem~2.8.1 (c)]{Tri1978} that $B_{p_0}^s(\bR^d)\subset B_\infty^{\alpha+\varepsilon_0}(\bR^d)=C^{0,\alpha+\varepsilon_0}(\bR^d)$.
Thus, one has
\begin{align}\label{2404171130}
\sum_{j\in\bZ}\|m(2^j\,\cdot\,)\widehat{\psi}(\,\cdot\,)\|_{C^{0,\alpha+\varepsilon_0}}^2<\infty\,.
\end{align}
This implies that $m\in C(\bR^d)$, $m(0)=0$, and
$$
|m(\xi)-m(\xi')|\lesssim \left(\frac{|\xi-\xi'|}{|\xi|}\right)^{\alpha+\varepsilon_0}
$$
for all $\xi,\xi'\in\bR^d$ with $|\xi-\xi'|\leq \frac{|\xi|}{2}$.
Therefore, one can observe that for any Schwartz function $f$ and $x\in\bR^d$, $F(t):=T_{m(t\,\cdot\,)}f(x)$ satisfies $F\in C_{loc}\big([0,\infty)\big)\cap C^{0,\alpha+\varepsilon_0}_{loc}\big((0,\infty)\big)$.

Take any $\beta\in \big(\frac{\kappa(E)}{2},\alpha\big)$, and apply Lemma~\ref{lem_sqrftn} for $F(t):=T_{m(t\,\cdot\,)}f(x)$ to obtain
\begin{align}\label{2404171115}
\sup_{t\in E}\left|T_{m(t\,\cdot\,)}f\right|^2&\lesssim \sum_{j\in\bZ}\int_1^2 d(s,\widetilde{E}_j)^{-1+2\beta}\left|D^\alpha_{0+}\left(T_{m(2^js\,\cdot\,)}f\right)\right|^2\dd s\,.
\end{align}
We apply \eqref{220514332} of Lemma~\ref{lem_fraccal} to obtain that for any $s\in[1,2]$, 
\begin{align}\label{2404171116}
\left|D^\alpha_{0+}\left(T_{m(2^js\,\cdot\,)}f\right)\right|\lesssim \left|T_{m(2^js\,\cdot\,)}f\right|+\left|T_{\wt{m}(2^js\,\cdot\,)}f\right|\,,
\end{align}
where $D^\alpha_{0+}$ is for the parameter $s$ and 
\begin{align}\label{2508291038}
\wt{m}(\xi):=\int_0^1\frac{m(\xi)-m(r\xi)}{(1-r)^{1+\alpha}}\dd r
\end{align}
(due to \eqref{2404171130}, $\wt{m}$ is well-defined and it is bounded).
Then by \eqref{2404171115} and \eqref{2404171116}, we have
\begin{align}\label{240422402}
\sup_{t\in E}\left|T_{m(t\,\cdot\,)}f\right|\lesssim \|G(\wt{m},f)\|_{\cH}+\|G(m,f)\|_{\cH}\,,
\end{align}
where $G(m,f):=\{T_{m(2^js\,\cdot\,)}f\}_{j\in\bZ,s\in [1,2]}$ and $\cH$ is an $L^2$-space defined by the following norm: 
$$
\big\|\big\{F(j,s)\big\}_{j\in\bZ,s\in[1,2]}\big\|_{\cH}^2:=\sum_{j\in\bZ}\int_1^2d(s,\widetilde{E}_j)^{-1+2\beta}|F(j,s)|^2\dd s\,.
$$
Note that $G(m,f)$ can be interpreted by a $\mathcal{H}$-valued Fourier multiplier operator.
Let $M_m:\bR^d\rightarrow \cH$ be given by
$$
M_m(\xi)=\{m(2^js\xi)\}_{j\in\bZ,s\in[1,2]}\,,
$$
so that $G(m,f)=T_{M_m}f:\bR^d\rightarrow \cH$.
It is not difficult to show that
\begin{align}\label{231228_1628}
    \| G(m,f) \|_{L^2(\bR^d;\mathcal{H})} 
    \lesssim 
    \|M_m\|_{L^\infty(\bR^d, \mathcal{H})} \|f\|_{L^2(\bR^d)}.
\end{align}
Moreover, the vector-valued Carder\'on-Zygmund theory (see \cite{Ru_Ru_To1986}) implies that $G(m,f)$ satisfies
\begin{align*}
\begin{split}
    \| G(m,f) \|_{L^1(\bR^d;\mathcal{H})}&\lesssim \sup_{k\in \bZ} \| M_m(2^k\,\cdot\,)\widehat{\psi}\|_{L_{d/2+\varepsilon}^2(\bR^d; \mathcal{H})} \|f\|_{H^1(\bR^d)}\,,\\
    \| G(m,f) \|_{BMO(\bR^d;\mathcal{H})}&\lesssim \sup_{k\in \bZ} \| M_m(2^k\,\cdot\,)\widehat{\psi}\|_{L_{d/2+\varepsilon}^2(\bR^d; \mathcal{H})}\|f\|_{L^\infty(\bR^d)}\,.
\end{split}
\end{align*}
for any $\varepsilon>0$.

We temporarily assume
\begin{gather}
        \|M_m\|_{L^\infty(\bR^d; \mathcal{H})}
    \lesssim \left(\sum_{j\in\bZ}\|m(2^j\,\cdot\,)\widehat{\psi}\|_\infty^2\right)^{1/2}
    = \|m\|_{\Sigma^2(L^\infty)}\,,\label{ineq_L2esti}\\
    \sup_{k\in\bZ}\|M_m(2^k\,\cdot\,)\widehat{\psi}\|_{L_\beta^2(\bR^d; \mathcal{H})} \simeq \|m\|_{\Sigma^2(L_\beta^2)}\quad\text{for all}\quad \beta\geq0\,,\label{ineq_endpts}
\end{gather}
and the following embedding result for $\widetilde{m}$:
\begin{lemma}\label{lem_emb}
Let $\alpha\in(0,1)$ and let $\widetilde{m}$ be defined in \eqref{2508291038}.
For any $1<p\leq \infty$, $s\in\bR$, and $\varepsilon>0$, 
$$
\|\wt{m}\|_{\Sigma^2(B_p^s)}\leq C(d,s,p,\alpha,\varepsilon)\|m\|_{\Sigma^2(B_p^{s+\alpha+\varepsilon})}\,.
$$
\end{lemma}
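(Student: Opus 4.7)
The plan is to split the integral defining $\widetilde m$ at $r=1/2$: write $\widetilde m=\widetilde m_1+\widetilde m_2$, where $\widetilde m_i$ is the integral over $[0,1/2]$ and $[1/2,1]$, respectively. On $\widetilde m_1$ the weight $(1-r)^{-1-\alpha}$ is bounded, so Minkowski's inequality in $r$ combined with Lemma~\ref{231109931}(3) immediately gives
\[
\|\widetilde m_1\|_{\Sigma^2(B_p^s)}\lesssim\int_0^{1/2}\big(\|m\|_{\Sigma^2(B_p^s)}+\|m(r\,\cdot\,)\|_{\Sigma^2(B_p^s)}\big)\,dr\lesssim\|m\|_{\Sigma^2(B_p^s)},
\]
with no loss of regularity; this is absorbed into $\|m\|_{\Sigma^2(B_p^{s+\alpha+\varepsilon})}$. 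All the real work is in $\widetilde m_2$, where the additional $\alpha+\varepsilon$ derivatives of $m$ must genuinely be used.

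For $\widetilde m_2$, I fix a Schwartz bump $\widehat{\widetilde\psi}$ supported in a slightly larger annulus than $\supp(\widehat\psi)$ and identically $1$ on $\bigcup_{r\in[1/2,1]}r\cdot\supp(\widehat\psi)$, and set $F_j(\eta):=m(2^j\eta)\widehat{\widetilde\psi}(\eta)$. On $\supp(\widehat\psi)$ one has $m(2^j\xi)=F_j(\xi)$ and $m(2^jr\xi)=F_j(r\xi)$ for every $r\in[1/2,1]$, so
\[
\widetilde m_2(2^j\xi)\,\widehat\psi(\xi)=\int_{1/2}^1\frac{\big(F_j(\xi)-F_j(r\xi)\big)\widehat\psi(\xi)}{(1-r)^{1+\alpha}}\,dr.
\]
Minkowski in $r$ then reduces matters to the key operator estimate for $T_rF:=(F-F(r\,\cdot\,))\widehat\psi$:
\[
\|T_rF\|_{B_p^s}\lesssim(1-r)^\sigma\|F\|_{B_p^{s+\sigma}}\qquad\text{for }\sigma\in(0,1)\text{ and }r\in[1/2,1].
\]
Taking $\sigma=\alpha+\varepsilon/2$ (valid provided $\varepsilon<2(1-\alpha)$; for larger $\varepsilon$ the conclusion is monotonically weaker) and using $\int_{1/2}^1(1-r)^{\varepsilon/2-1}\,dr<\infty$ will yield $\|\widetilde m_2(2^j\,\cdot\,)\widehat\psi\|_{B_p^s}\lesssim\|F_j\|_{B_p^{s+\alpha+\varepsilon}}$. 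Writing $\widehat{\widetilde\psi}$ as a finite sum involving dilates $\widehat\psi(\,\cdot\,/2^n)$, $|n|\le N_0$, bounds $\|F_j\|_{B_p^{s+\alpha+\varepsilon}}$ by $\sum_{|n|\le N_0}\|m(2^{j+n}\,\cdot\,)\widehat\psi\|_{B_p^{s+\alpha+\varepsilon}}$, and squaring and summing in $j$ gives $\|\widetilde m_2\|_{\Sigma^2(B_p^s)}\lesssim\|m\|_{\Sigma^2(B_p^{s+\alpha+\varepsilon})}$.

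The main obstacle is the key operator estimate on $T_r$, which I intend to establish by complex interpolation between two endpoint bounds. At $\sigma=0$: $\|T_rF\|_{B_p^s}\lesssim\|F\|_{B_p^s}$ holds because $\widehat\psi$ is a pointwise multiplier on $B_p^s$ and the dilation $F\mapsto F(r\,\cdot\,)$ is uniformly bounded on $B_p^s$ for $r\in[1/2,1]$ (after the change of variables $\xi\mapsto\xi/r$ transferring the dilation onto $\widehat\psi$). At $\sigma=1$: writing $F(\xi)-F(r\xi)=\int_r^1\sum_i\xi_i(\partial_iF)(t\xi)\,dt$, Minkowski in $t$ combined with the dilation--localization argument used in the proof of Lemma~\ref{231109931}(3) yields $\|T_rF\|_{B_p^s}\lesssim(1-r)\|F\|_{B_p^{s+1}}$. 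Since $[B_p^s,B_p^{s+1}]_\theta=B_p^{s+\theta}$ (the interpolation identity used in Lemma~\ref{231109931}(4)), complex interpolation of operators then produces $\|T_rF\|_{B_p^s}\lesssim(1-r)^\theta\|F\|_{B_p^{s+\theta}}$ with implicit constant independent of $r$, completing the reduction.
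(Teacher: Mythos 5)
Your proof is correct and follows essentially the same route as the paper: split the $r$-integral at $1/2$, bound the piece over $[0,1/2]$ trivially via the dilation invariance of $\Sigma^2(B_p^s)$, and for $r\in[1/2,1]$ use the fundamental theorem of calculus to gain one derivative with a factor $(1-r)$, then interpolate to the exponent $\alpha+\varepsilon$ so that $\int_{1/2}^1(1-r)^{-1+\varepsilon}\,\mathrm{d}r$ converges. The only difference is organizational: the paper interpolates the map $m\mapsto m-m(r\,\cdot\,)$ directly on the $\Sigma^2(B_p^s)$ scale using Lemma~\ref{231109931}(3)--(4), whereas you localize to a single Littlewood--Paley block with a fattened cutoff and interpolate on the Besov scale before summing in $j$; both versions are valid and rest on the same ingredients.
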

Since it is clear from \eqref{holder-besov} and the definition of $\Sigma^2$ spaces that
$$
\Sigma^2(L^{\infty})\supset \Sigma^2(B_\infty^\varepsilon)\supset \Sigma^2(B_\infty^{\alpha+2\varepsilon})\quad \text{and}\quad \Sigma^2(L^2_{d/2+\varepsilon})\supset \Sigma^2(L^2_{d/2+\alpha+2\varepsilon})\,,
$$
\eqref{231228_1628} - \eqref{ineq_endpts} and Lemma~\ref{lem_emb} imply that for any $\varepsilon>0$,
    \begin{align}
    \| G(m, f) \|_{L^2(\bR^d; \mathcal{H})}+\| G(\widetilde{m}, f) \|_{L^2(\bR^d; \mathcal{H})}
    &\lesssim \|m\|_{\Sigma^2(B_\infty^{\alpha+2\varepsilon})} \|f\|_{L^2(\bR^d)}\,,\label{231228_1707}\\
    \| G(m, f) \|_{L^1(\bR^d; \mathcal{H})}+\| G(\widetilde{m}, f) \|_{L^1(\bR^d; \mathcal{H})}
    &\lesssim \|m\|_{\Sigma^2(L^2_{d/2+\alpha+2\varepsilon})}\|f\|_{H^1(\bR^d)}\,,\label{231228_1708}\\
    \| G(m, f) \|_{BMO(\bR^d; \mathcal{H})}+\| G(\widetilde{m}, f) \|_{BMO(\bR^d; \mathcal{H})}
    &\lesssim \|m\|_{\Sigma^2(L^2_{d/2+\alpha+2\varepsilon})} \|f\|_{L^\infty(\bR^d)}\,.\label{231228_1709}
\end{align}

Consider $G(m,f)$ as a bi-linear operator and apply the following lemma to interpolate the pairs 
\{\eqref{231228_1707}, \eqref{231228_1708}\} and \{\eqref{231228_1707}, \eqref{231228_1709}\}:
\begin{lemma}[paragraph 10.1, \cite{Cal1964}]\label{complex interpolation}
    Let $L(x_1, \dots, x_n)$ be a multilinear mapping defined for $x_i \in A_i \cap B_i, i=1,\dots, n$ with values in $A\cap B$ and such that
  \begin{align*}
    \big\| L(x_1, x_2, \dots, x_n) \big\|_A \leq M_0 \prod_{i=1}^n \| x_i \|_{A_i},\\
    \big\| L(x_1, x_2, \dots, x_n) \big\|_B \leq M_1 \prod_{i=1}^n \| x_i \|_{B_i}\,.
  \end{align*}
  Then we have
  \begin{align*}
    \big\| L(x_1,x_2, \dots, x_n) \big\|_C \leq M_0^{1-\theta}M_1^\theta  \prod_{i=1}^n \| x_i \|_{C_i},
  \end{align*}
  where $C=[A,B]_{\theta}$, $C_i = [A_i, B_i]_\theta$, $i=1,\dots,n$
and thus $L$ can be extended continuously to a multilinear mapping of $C_1\times \ldots\times C_n$ into $C$\,.
\end{lemma}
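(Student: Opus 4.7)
The plan is to adapt Calderón's classical complex interpolation argument to the $n$-linear setting via a three-lines type estimate on the strip $S := \{z \in \bC : 0 < \Re z < 1\}$. Recall that by the Calderón complex method, $\|x\|_{[A,B]_\theta}$ equals the infimum, over all functions $f$ holomorphic on $S$ and bounded continuous on $\overline{S}$ with values in $A+B$ satisfying $f(it)\in A$, $f(1+it)\in B$, and $f(\theta)=x$, of the quantity $\max\bigl(\sup_t \|f(it)\|_A,\,\sup_t \|f(1+it)\|_B\bigr)$. I will construct such an $f$ representing $L(x_1,\dots,x_n)$ and exploit the hypothesized multilinear bounds on each of the two boundary components.

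For each $i$ and any $\epsilon>0$, fix an admissible $f_i:\overline{S}\to A_i+B_i$ with $f_i(\theta)=x_i$; after the standard holomorphic rescaling $f_i(z)\mapsto \mu_i^{z-\theta}f_i(z)$ with a suitable $\mu_i>0$, one may arrange that both $\sup_t\|f_i(it)\|_{A_i}$ and $\sup_t\|f_i(1+it)\|_{B_i}$ are at most $(1+\epsilon)\|x_i\|_{C_i}$. Now set
\[
  F(z) := L\bigl(f_1(z),\dots,f_n(z)\bigr), \qquad G(z) := M_0^{\,z-\theta}\,M_1^{\,\theta-z}\,F(z),
\]
where the scalar factor is chosen precisely so that its modulus takes the value $M_0^{-\theta}M_1^{\theta}$ on $\Re z=0$ and $M_0^{1-\theta}M_1^{\theta-1}$ on $\Re z=1$, absorbing the asymmetry between the $A$- and $B$-bounds. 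Multilinearity of $L$ implies that $G$ is holomorphic on $S$, bounded continuous on $\overline{S}$ with values in $A+B$, and $G(\theta)=L(x_1,\dots,x_n)$. On the boundary $\Re z=0$, the hypothesis gives
\[
    \|G(it)\|_A \leq M_0^{-\theta}M_1^{\theta}\cdot M_0 \prod_{i=1}^n (1+\epsilon)\|x_i\|_{C_i} = M_0^{1-\theta}M_1^{\theta} \prod_{i=1}^n (1+\epsilon)\|x_i\|_{C_i},
\]
and the analogous computation on $\Re z=1$ produces the same upper bound. Consequently $G$ witnesses $L(x_1,\dots,x_n)\in[A,B]_\theta$ with the required control on the interpolation norm; letting $\epsilon\to 0$ yields the inequality for tuples from $\prod_i(A_i\cap B_i)$, and density then extends $L$ continuously to a multilinear map $\prod_i C_i \to C$ with the same constant.

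The main technical obstacle is verifying that $F = L(f_1,\dots,f_n)$ is genuinely holomorphic on $S$ and continuous and bounded on $\overline{S}$ with values in $A+B$, given that $L$ is a priori defined only on tuples from $\prod_i(A_i\cap B_i)$. This is handled by first extending $L$ continuously to a multilinear map $\prod_i(A_i+B_i)\to A+B$ via a decomposition/triangle-inequality argument using both hypothesized bounds, and then observing that the composition of a jointly continuous multilinear map with holomorphic vector-valued arguments is itself holomorphic. If the chosen definition of admissible function imposes decay at infinity on the boundaries, one inserts the auxiliary factor $e^{\delta(z^2-\theta^2)}$ into $G$ and lets $\delta\to 0$ at the end — a routine modification in the Calderón framework that does not affect the final constant.
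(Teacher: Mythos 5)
The paper itself offers no proof of this lemma --- it is quoted directly from Calder\'on \cite{Cal1964}, paragraph 10.1 --- so your attempt must be measured against the classical argument. Your skeleton is exactly the standard one: near-optimal admissible functions $f_i$ with $f_i(\theta)=x_i$, the compensating scalar factor $M_0^{z-\theta}M_1^{\theta-z}$ (your boundary computations on $\Re z=0$ and $\Re z=1$ are correct), the definition of the $[A,B]_\theta$-norm evaluated at $z=\theta$, then $\epsilon\to 0$ and density of $A_i\cap B_i$ in $C_i$. The $e^{\delta(z^2-\theta^2)}$ remark and the rescaling by $\mu_i^{z-\theta}$ are harmless (the latter is in fact superfluous: with the max-definition of the interpolation norm a near-optimal $f_i$ already bounds both boundary suprema by $(1+\epsilon)\|x_i\|_{C_i}$).

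The genuine gap is in your final paragraph, where you legitimize $F(z)=L\bigl(f_1(z),\dots,f_n(z)\bigr)$ by ``first extending $L$ continuously to a multilinear map $\prod_i(A_i+B_i)\to A+B$ via a decomposition/triangle-inequality argument using both hypothesized bounds.'' No such extension follows from the hypotheses. The two estimates control $L$ only on tuples measured \emph{entirely} in the $A_i$-norms or \emph{entirely} in the $B_i$-norms; mixed tuples are uncontrolled. Concretely, for bilinear $L$ and $a\in A_1\cap B_1$, $b\in A_2\cap B_2$, all you know is $\|L(a,b)\|_{A+B}\leq \min\bigl(M_0\|a\|_{A_1}\|b\|_{A_2},\,M_1\|a\|_{B_1}\|b\|_{B_2}\bigr)$, which need not be small when $a$ is small in $A_1$ but large in $B_1$ while $b$ is small in $B_2$ but large in $A_2$; so $L$ is not continuous for the sum-space topologies on its given domain, and the decomposition $x_i=a_i+b_i$ generates cross terms such as $L(a_1,b_2,\dots)$ that neither hypothesis bounds. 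Hence even the boundedness and continuity of $F$ on $\overline{S}$, let alone holomorphy, is unjustified for general admissible $f_i$. The classical repair --- and the way Calder\'on actually argues --- avoids any extension: one computes the $C_i$-norms using only the dense subclass of admissible functions of the form $f_i(z)=e^{\delta z^2}\sum_{k} e^{\lambda_k z}a_k$ with $a_k\in A_i\cap B_i$ (see also Bergh--L\"ofstr\"om \cite{BerLof1976}, Lemma 4.2.3). For such $f_i$, every value $f_i(z)$ lies in $A_i\cap B_i$ and $F$ is a finite sum of scalar holomorphic functions times the fixed vectors $L\bigl(a^{(1)}_{k_1},\dots,a^{(n)}_{k_n}\bigr)\in A\cap B$, so holomorphy, boundedness, and continuity on the closed strip are automatic; with this substitution the rest of your argument goes through verbatim, and your closing density step (relying on density of $A_i\cap B_i$ in $[A_i,B_i]_\theta$) is correct.
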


\noindent For $\frac{1}{p_0} := \left|\frac{1}{2}-\frac{1}{p}\right|$ and $\theta:=\frac{2}{p_0}$, we have  
\begin{align*}
    &\left[\Sigma^2\left(C^{0, \alpha+2\varepsilon}\right), \Sigma^2\left(L^2_{d/2+\alpha+2\varepsilon}\right)\right]_\theta
    \\
    =\,& \left[\Sigma^2\left(B^{\alpha+2\varepsilon}_{\infty}\right), \Sigma^2\left(B_2^{d/2+\alpha+2\varepsilon}\right)\right]_\theta=\Sigma^2\left( B_{p_0}^{d/p_0 + \alpha+2\varepsilon}\right)
\end{align*}
(see \eqref{holder-besov} and Lemma~\ref{231109931}).
Therefore, from \eqref{231228_1707} - \eqref{231228_1709} and Lemma~\ref{complex interpolation},  we have
$$
\big\|G(m,f)\big\|_{L^p(\bR^d;\cH)}+\big\|G(\widetilde{m},f)\big\|_{L^p(\bR^d;\cH)}\lesssim \|m\|_{\Sigma^2\left(B_{p_0}^{d/p_0+\alpha+2\varepsilon}\right)}\|f\|_p\,.
$$
Combining this with \eqref{240422402}, we conclude that 
\begin{align*}
\Big\|\sup_{t\in E}\left|T_{m(t\,\cdot\,)}f\right|\Big\|_p&\lesssim \|m\|_{\Sigma^2\left(B_{p_0}^{d/p_0+\alpha+2\varepsilon}\right)}\|f\|_p\,.
\end{align*}
Since $\frac{d}{p_0}+\alpha<s$, and $\varepsilon>0$ is arbitrary,
we obtain the desired results.

It remains to prove \eqref{ineq_L2esti}, \eqref{ineq_endpts}, and Lemma~\ref{lem_emb} to complete the proof.
We present each proof in following subsections.

\subsection{Estimate \eqref{ineq_L2esti}}
We first note that due to $\kappa(E)<1$ and Lemma~\ref{231109742}, it follows from $\beta>\frac{\kappa(E)}{2}$ that $N(\widetilde{E}_j,\delta)\leq N(E_j,\delta)+2\leq C_\beta \delta^{-2\beta}$, and therefore
\begin{align}\label{2404161012}
\sup_{j\in\bZ}\int_1^2d(s,\widetilde{E}_j)^{-1+2\beta}\dd s<\infty\,.
\end{align}

For fixed $\xi\in\bR^d$, take $k_0\in\bZ$ such that $2^{k_0}\leq |\xi|\leq 2^{k_0+1}$.
Then 
\begin{align*}
&\sum_{j\in\bZ}\int_1^2d(s,E_j)^{-1+2\beta}\big|m(2^js\xi)\big|^2\dd s\\
\lesssim\,&\sum_{j\in\bZ}\sum_{k=0,1,2}\int_1^2d(s,E_j)^{-1+2\beta}\dd s\, \|m(2^{j+k_0}\,\cdot\,)\widehat{\psi}_k\|_\infty^2\\
\lesssim\,& \sum_{j\in\bZ}\|m(2^j\,\cdot\,)\widehat{\psi}\|_\infty^2\,,
\end{align*}
where the first inequality follows from that
$$
\sum_{k=0,1,2}\widehat{\psi}_k\equiv 1\quad \text{on}\quad \{\xi\in\bR^d\,:\,1\leq |\xi|\leq 4\}
$$
(note that $\widehat{\psi}_j$ denotes the Littlewood--Paley decomposition defined in \eqref{psi}),
and the second inequality is implied by \eqref{2404161012}.
Thus we obtain
$$
\|M_m\|_{L_{\infty}(\bR^d;\cH)}\lesssim \left(\sum_{j\in\bZ}\|m(2^j\,\cdot\,)\widehat{\psi}\|_\infty^2\right)^{1/2}\,,
$$
which implies
\begin{align*}
    \|G(m,f)\|_{L^2(\mathcal{H})\to L^2(\mathcal{H})}
    \lesssim \left(\sum_{j\in\bZ}\|m(2^j\,\cdot\,)\widehat{\psi}\|_\infty^2\right)^{1/2}\,.
\end{align*}

\subsection{Estimate \eqref{ineq_endpts}}

For any $k\in\bZ$,  
\begin{align*}
&\|M_m(2^k\,\cdot\,)\widehat{\psi}\|_{L^2(\bR^d;\cH)}^2\\
=&\int_{\bR^d}\left(\sum_{j\in\bZ}\int_1^2d(s,E_j)^{-1+2\beta}\big|m\big(2^js(2^k\xi)\big)\widehat{\psi}(\xi)\big|^2\dd s\right)\dd \xi\\
\lesssim & \int_{\bR^d}\left(\sum_{j\in\bZ}\int_1^2d(s,E_j)^{-1+2\beta}(2^{j+k}s)^{-d}\left|\widehat{\psi}\Big(\frac{\xi}{2^{j+k}s}\Big)\right|^2\dd s\right) |m(\xi)|^2\dd \xi\,.
\end{align*}
One can observe that for $1\leq s \leq 2$, 
$$
\left(2^{j+k}s\right)^{-d}\left|\widehat{\psi}\left(\frac{\xi}{2^{j+k}s}\right)\right|^2\lesssim |\xi|^{-d}1_{4^{-1}\leq |\xi|/(2^{j+k}s)\leq 4}\leq |\xi|^{-d}1_{8^{-1}\leq |\xi|/(2^{j+k})\leq 8}\,.
$$
This implies that
\begin{align*}
&\sum_{j\in\bZ}\int_1^2d(s,E_j)^{-1+2\beta}(2^{j+k}s)^{-d}\left|\widehat{\psi}\left(\frac{\xi}{2^{j+k}s}\right)\right|^2\dd s\\
\lesssim\, &\sum_{j\in\bZ}\left(\int_1^2d(s,E_j)^{-1+2\beta}\dd s\right) 1_{8^{-1}\leq |\xi|/(2^{j+k})\leq 8}|\xi|^{-d}\\
\lesssim\, &\sum_{j\in\bZ}1_{8^{-1}\leq |\xi|/(2^{j+k})\leq 8}|\xi|^{-d}\\
\lesssim\,& |\xi|^{-d}\,,
\end{align*}
where the second inequality follows from \eqref{2404161012}.
Consequently, we have
\begin{align}\label{2311091104}
\sup_{k\in\bZ}\|M_m(2^k\,\cdot\,)\widehat{\psi}\|_{L^2(\bR^d;\cH)}\lesssim \left(\int_{\bR^d}|m(\xi)|^2\frac{\mathrm{d}\xi}{|\xi|^d}\right)^{1/2}\simeq \|m\|_{\Sigma(L^2)}\,.
\end{align}
Note \eqref{2311091104} holds for all $\widehat{\psi}\in C_c^{\infty}(B_4\setminus\overline{B}_{1/4})$.

For any $l\in\bN$,
$$
\|M_m(2^k\,\cdot\,)\widehat{\psi}\|_{L^2_l(\bR^d;\cH)}\lesssim \sum_{|\gamma|+|\gamma'|=l}\|2^{k|\gamma|}\big(\partial^\gamma M_m\big)(2^k\,\cdot\,)\partial^{\gamma'}\widehat{\psi}\|_{L^2(\bR^d;\cH)}\,.
$$
Since $M_m(\xi)=\{m(2^js\xi)\}_{j,s}$, we have
\begin{align}\label{231091119}
\begin{split}
\big(\partial^\gamma M_m\big)(\xi)&=\big\{\big(2^{j}s\big)^{|\gamma|}\times \big(\partial^\gamma m\big)(2^js\xi)\big\}_{j,s}\\
&=\big\{\big(|\cdot|^{|\gamma|}\partial^\gamma m\big)(2^js\xi)\times |\xi|^{-n}\big\}_{j,s}\\
&=M_{|\,\cdot\,|^{|\gamma|}\partial^\gamma m}(\xi)\times |\xi|^{-n}\,.
\end{split}
\end{align}
Due to \eqref{2311091104}, \eqref{231091119}, and Lemma~\ref{231109931}, we have
\begin{align}\label{2311091107}
\|M_m(2^k\,\cdot\,)\widehat{\psi}\|_{L^2_l(\bR^d;\cH)}&\lesssim \sum_{|\gamma|+|\gamma'|\leq l}\|2^{k|\gamma|}\big(\partial^\gamma M_m\big)(2^k\,\cdot\,)\partial^{\gamma'}\widehat{\psi}\|_{L^2(\bR^d;\cH)}\nonumber\\
&=\sum_{|\gamma|+|\gamma'|\leq l}\bigg\|M_{|\,\cdot\,|^{|\gamma|}\partial^{\gamma}m}(2^k\,\cdot\,)\times \frac{\partial^{\gamma'}\widehat{\psi}}{|\cdot|^{|\gamma|}}\bigg\|_{L^2(\bR^d;\cH)}\\
&\lesssim \sum_{|\gamma|\leq l} \int_{\bR^d}\big|\partial^{\gamma}m(\xi)\big|^2|\xi|^{2|\gamma|-d}\dd \xi\nonumber\\
&\simeq \|m\|_{\Sigma^2(L^2_l)}\nonumber
\end{align}
(for the last similarity, see Lemma~\ref{231109931}.(2)).

By interpolating \eqref{2311091107}, we obtain that for any $\beta\geq 0$, 
\begin{align*}
\sup_{k\in\bZ}\|M_m(2^k\,\cdot\,)\widehat{\psi}\|_{L^2_\beta(\bR^d;\cH)}\lesssim\|m\|_{\Sigma^2(L^2_\beta)}
\end{align*}
(see \cite[Theorem~5.6.9]{HNVW2016} for the interpolation of vector-valued Sobolev spaces $L^2_\beta(\bR^d;\cH)$, and see Lemma~\ref{231109931}.(4) for the interpolation of $\Sigma^2(L^2_\beta)=\Sigma^2(B_2^\beta)$).

\subsection{Proof of Lemma~\ref{lem_emb}}
Recall that
$$
\wt{m}(\xi)=\int_0^1\frac{m(\xi)-m(r\xi)}{(1-r)^{1+\alpha}}\dd r=\int_0^{1/2}(\,\cdots\,)\dd r+\int_{1/2}^{1}(\,\cdots\,)\dd r=:\wt{m}_1+\wt{m}_2\,.
$$

It follows from Lemma~\ref{231109931}.(2) that
\begin{align}\label{231109939}
\|m-m(r\,\cdot\,)\|_{\Sigma^2(B_p^s)}\leq \|m\|_{\Sigma^2(B_p^s)}+\|m(r\,\cdot\,)\|_{\Sigma^2(B_p^s)}\lesssim \|m\|_{\Sigma^2(B_p^s)}\,,
\end{align}
which implies 
\begin{align}
\|\wt{m}_1\|_{\Sigma^2(B_p^s)}\,&\leq \int_0^{1/2}\frac{\|m-m(r\,\cdot\,)\|_{\Sigma^2(B_p^s)}}{(1-r)^{1+\alpha}}\dd r\nonumber\\
&\lesssim \int_0^{1/2}\frac{1}{(1-r)^{1+\alpha}}\dd r\times \|m\|_{\Sigma^2(B_p^s)}\label{231109947}\\
&\lesssim\|m\|_{\Sigma^2(B_p^{s+\alpha+\varepsilon})}\,.\nonumber
\end{align}

To estimate $\|\wt{m}_2\|_{\Sigma^2(B_p^s)}$, observe that
$$
m(\xi)-m(r\xi)=\int_r^1\xi\cdot\nabla m(t\xi)\dd t\,,
$$
which implies that for any $r\in\left(\frac{1}{2},1\right)$,
\begin{align}
\|m-m(r\,\cdot\,)\|_{\Sigma^2(B_p^s)}&\leq \int_r^1 t^{-1}\|t\xi\cdot \nabla m(t\xi)\|_{\Sigma^2(B_p^s)}\dd t\nonumber\\
&\lesssim \int_r^1 t^{-1}\dd t\|\xi\cdot \nabla m(\xi)\|_{\Sigma^2(B_p^s)}\label{231109938}\\
&\lesssim (1-r)\|m\|_{\Sigma^2(B_p^{s+1})}\,,\nonumber
\end{align}
where the inequalities are implied by Lemma~\ref{231109931}.(3).
By interpolating \eqref{231109938} and \eqref{231109939} (see Lemma~\ref{231109931}.(4)), we obtain that for any $\varepsilon\in(0,1-\alpha)$ and $r\in\left(\frac{1}{2},1\right)$, 
\begin{align}\label{231109943}
\|m-m(r\,\cdot\,)\|_{\Sigma^2(B_p^s)}\leq  C\, (1-r)^{\alpha+\varepsilon}\|m\|_{\Sigma^2(B_p^{s+\alpha+\varepsilon})}\,,
\end{align}
where $C=C(d,s,p,\alpha,\varepsilon)$.
Due to \eqref{231109943}, we have
\begin{equation}\label{231109946}
\begin{aligned}
\|\wt{m}_2\|_{\Sigma^2(B_p^s)}&\leq \int_{1/2}^1\frac{\|m-m(r\,\cdot\,)\|_{\Sigma^2(B_p^s)}}{(1-r)^{1+\alpha}}\dd r\\
&\lesssim \int_{1/2}^{1}(1-r)^{-1+\varepsilon}\dd r\times \|m\|_{\Sigma^2(B_p^{s+\alpha+\varepsilon})}\,.
\end{aligned}
\end{equation}

Since $\int_{1/2}^1(1-r)^{-1+\varepsilon}\dd r<\infty$, by combining \eqref{231109946} and \eqref{231109947}, we have
$$
\|\wt{m}\|_{\Sigma^2(B_p^s)}\leq \|\wt{m}_1\|_{\Sigma^2(B_p^s)}+\|\wt{m}_2\|_{\Sigma^2(B_p^s)}\lesssim \|m\|_{\Sigma^2(B_p^{s+\alpha+\varepsilon})}\,,
$$
and the proof is completed.

\section*{Acknowledgement}
J. B. Lee has been supported by the National Research Foundation of Korea(NRF) grants funded by the Korea government(MSIT) (No. 2021R1C1C\-2008252, No. 2022R1A4A1018904).
J. Seo has been supported by a KIAS Individual Grant (MG095801) at Korea Institute for Advanced Study, and by the National Research Foundation of Korea (NRF-2019R1A5A1028324).

\bibliographystyle{acm}

\begin{thebibliography}{10}

\bibitem{AHRS2021}
{\sc Anderson, T. C., Hughes, K., Roos, J., and Seeger, A.}
\newblock $L^p-L^q$ estimates for spherical maximal operators.
\newblock {\em Math. Z. 297} (2021), 19pp.

\bibitem{BerLof1976}
{\sc Bergh, J., and L\"ofstr\"om, J.}
\newblock {\em Interpolation Spaces. An Introduction.}
\newblock Grundlehren der Mathematischen Wissenschaften, No.223. Springer-Verlag, Berlin-New York, 1976.

\bibitem{Bo1986}
{\sc Bourgain, J.}
\newblock Averages in the plane over convex curves and maximal operators.
\newblock {\em J. Analyse Math. 47\/} (1986), 69--85.

\bibitem{Cal1964}
{\sc Calder\'{o}n, A.-P.}
\newblock Intermediate spaces and interpolation, the complex method.
\newblock {\em Studia Math. 24\/} (1964), 113--190.

\bibitem{CPCal1979}
{\sc Calder\'{o}n, C.~P.}
\newblock Lacunary spherical means.
\newblock {\em Illinois J. Math. 23}, 3 (1979), 476--484.

\bibitem{Ch_Gr_Hon_See2005}
{\sc Christ, M., Grafakos, L., Honz\'{\i}k, P., and Seeger, A.}
\newblock Maximal functions associated with {F}ourier multipliers of {M}ikhlin--{H}\"{o}rmander type.
\newblock {\em Math. Z. 249}, 1 (2005), 223--240.

\bibitem{CKKL2023}
{\sc Cho, C., Ko, H., Koh, Y., and Lee, S.}
\newblock Pointwise convergence of sequential Schr\"odinger means.
\newblock {\em J. Ineqaul. Appl. 54} (2023) 14pp.

\bibitem{DuoVar1998}
{\sc Duoandikoetxea, J., and Vargas, A.}
\newblock Maximal operators associated to {F}ourier multipliers with an arbitrary set of parameters.
\newblock {\em Proc. Roy. Soc. Edinburgh Sect. A 128}, 4 (1998), 683--696.

\bibitem{FJW1956}
{\sc Frazier, M., Jawerth, B., and Weiss, G.}
\newblock {\em Littlewood--Paley theory and the study of function spaces.}
\newblock No.79. American Mathematical Society, 1993.

\bibitem{Gra2014}
{\sc Grafakos, L.}
\newblock {\em Modern Fourier Analysis, 3rd eidtion.}
\newblock Vol.250. New York: Springer, 2014.

\bibitem{Gr_Hon_See2006}
{\sc Grafakos, L., Honz\'{\i}k, P., and Seeger, A.}
\newblock On maximal functions for {M}ikhlin-{H}\"{o}rmander multipliers.
\newblock {\em Adv. Math. 204}, 2 (2006), 363--378.

\bibitem{HNVW2016}
{\sc Hyt\"onen, M., Neerven, J.~v., Veraar, M., and Weis, L.}
\newblock {\em Analysis in Banach Spaces: Volume I: Martingales and Littlewood--Paley Theory.}
\newblock Vol.12. Berlin: Springer, 2016

\bibitem{Kip1960}
{\sc Kiprijanov, I.~A.}
\newblock The fractional differentiation operator and powers of elliptic operators.
\newblock {\em Soviet Math. Dokl. 1\/} (1960), 222--224.

\bibitem{LeeSeo_2023}
{\sc Lee, J.~B., and Seo, J.}
\newblock Maximal operators associated with {F}ourier multipliers and applications.
\newblock {\em J. Funct. Anal. 284}, 8 (2023), Paper No. 109857, 37.

\bibitem{LeTu20213}
{\sc Lehrb\"{a}ck, J., and Tuominen, H.}
\newblock A note on the dimensions of {A}ssouad and {A}ikawa.
\newblock {\em J. Math. Soc. Japan 65}, 2 (2013), 343--356.

\bibitem{Lo1}
{\sc Lototsky, S.~V.}
\newblock Sobolev spaces with weights in domains and boundary value problems for degenerate elliptic equations.
\newblock {\em Methods Appl. Anal. 7}, 1 (2000), 195--204.

\bibitem{Mi1980}
{\sc Miyachi, A.}
\newblock On some {F}ourier multipliers for {$H^{p}({\bf R}^{n})$}.
\newblock {\em J. Fac. Sci. Univ. Tokyo Sect. IA Math. 27}, 1 (1980), 157--179.

\bibitem{RoSe2023}
{\sc Roos, J., and Seeger, A.}
\newblock Spherical maximal functions and fractal dimensions of dilation sets.
\newblock {\em Amer. J. Math. 145}, 4 (2023), 1077--1110.

\bibitem{Ru_Ru_To1986}
{\sc Rubio~de Francia, J.~L., Ruiz, F.~J., and Torrea, J.~L.}
\newblock Calder\'{o}n-{Z}ygmund theory for operator-valued kernels.
\newblock {\em Adv. in Math. 62}, 1 (1986), 7--48.

\bibitem{Sam_Kil_Ma1993}
{\sc Samko, S.~G., Kilbas, A.~A., and Marichev, O.~I.}
\newblock {\em Fractional integrals and derivatives}.
\newblock Gordon and Breach Science Publishers, Yverdon, 1993.
\newblock Theory and applications, Edited and with a foreword by S. M. Nikol'ski\u{\i}, Translated from the 1987 Russian original, Revised by the authors.

\bibitem{SWW1995}
{\sc Seeger, A., Wainger, S., and Wright, J.}
\newblock Pointwise convergence of spherical means.
\newblock {\em Math. Proc. Cambridge Philos. Soc. 118}, 1 (1995), 115--124.

\bibitem{St1976}
{\sc Stein, E.~M.}
\newblock Maximal functions. {I}. {S}pherical means.
\newblock {\em Proc. Nat. Acad. Sci. U.S.A. 73}, 7 (1976), 2174--2175.

\bibitem{Tri1978}
{\sc Triebel, H.}
\newblock {\em Interpolation Theory, Function Spaces, Differential Operators.}
\newblock Amsterdam; New York: North-Holland Pub. Co., 1978.

\end{thebibliography}

\end{document}